\documentclass{amsart}
\usepackage{graphicx} 
\usepackage{stmaryrd}
\usepackage{dsfont,color,bbm}
\graphicspath{ {./images/} }
\usepackage{hyperref}
\hypersetup{
    colorlinks=true,
    linkcolor=blue,
    filecolor=magenta,      
    urlcolor=cyan,
    pdftitle={Overleaf Example},
    pdfpagemode=FullScreen,
    }

\usepackage{amsmath,amsfonts}
\usepackage{amssymb}
\usepackage{amsthm}
\usepackage{mathrsfs}
\usepackage{dsfont}
\usepackage{array}
\usepackage{graphicx}
\usepackage{tensor}
\usepackage{tikz}
\usetikzlibrary{math}
\usepackage{xcolor}
\usepackage{verbatim}
\usepackage{enumitem}
\usepackage{etoolbox} 
\usepackage{constants}
\usepackage{esint}

\newtheorem{theorem}{Theorem}[section]
\newtheorem{proposition}[theorem]{Proposition}

\newtheorem{lemma}[theorem]{Lemma}
\newtheorem{definition}{Definition}[section]
\newtheorem{remark}{Remark}[section]

\newcommand{\R}{\mathbb{R}} 
\newcommand{\T}{\mathbb{T}} 

\newcommand{\Pc}{\mathcal{P}}

\newcommand{\de}{\mathrm{d}}
\DeclareMathOperator{\id}{id} 
\DeclareMathOperator{\Div}{div} 

\DeclareMathOperator{\Per}{Per}

\title{Existence of a solution of the TV Wasserstein gradient flow}

\author{Kexin Lin and Filippo Santambrogio}

\address{Universit\'e Lyon 1, Ecole Centrale de Lyon, INSA Lyon, Universit\'e Jean Monnet, CNRS, ICJ, UMR 5208, Villeurbanne, France}
\email{klin@math.univ-lyon1.fr, santambrogio@math.univ-lyon1.fr}

\begin{document}

\begin{abstract}
On the flat torus in any dimension we prove existence of a solution to the TV Wasserstein gradient flow equation, only assuming that the initial density $\rho_0$ is bounded from below and above by strictly positive constants. This solution preserves upper and lower bounds of the densities, and shows a certain decay of the BV norm (of the order of $t^{-1/3}$ for $t\to 0$ -- if $\rho_0\notin BV$, otherwise the BV norm is of course bounded -- and of the order of $t^{-1}$ as $t\to\infty$). This generalizes a previous result by Carlier and Poon, who only gave a full proof in one dimension of space and did not consider the case $\rho_0\notin BV$.

The main tool consists in considering an approximated TV-JKO scheme which artificially imposes a lower bound on the density and allows to find a continuous-in-time solution regular enough to prove that the lower bounds of the initial datum propagates in time, and study on this approximated equation the decay of the BV norm. 
\end{abstract}

\maketitle

\section{Introduction}
Variational schemes based on total variation (TV) are extremely popular in image processing for denoising purposes. The corresponding continuous-in-time equations, consisting in the \(L^2\) gradient flow of the TV functional have been widely studied (we refer, for instance, to Bellettini, Caselles and Novaga \cite{bellettini2002total}). Due to the current ubiquitous use of the Wasserstein metric in data analysis, a natural variant of the standard $L^2$ total variation denoising (also called ROF problem, see \cite{rudin1992nonlinear}) consists in replacing the data fidelity term with the Wasserstein distance $W_2$. We refer, for instance, to \cite{during2012high} for a first application of this ``static'' variational problem. The corresponding gradient flow, i.e.\ the gradient flow of the TV functional in the $W_2$ space, takes the form of a non-linear (and non-trivial) fourth-order parabolic PDE. 
The first comprehensive study of such a PDE can be found in \cite{carlier2019total}, where Carlier and Poon (see also later contributions in \cite{chambolle2023total}) provide a theoretical justification of the convergence of the JKO scheme under some conditions. More precisely, if \(\Omega\) is an open bounded connected subset of \(\R^d\) and \(\rho_0 \in BV(\Omega) \cap \mathcal{P}  (\Omega)\) is an initial datum bounded from below and above by two positive constants\footnote{Note the usual abuse of notation where measures are identified with their densities, when they are absolutely continuous. In particular, when we say that a probability measure is BV, or is bounded by some constants a.e., we mean that it is absolutely continuous and its density is BV or bounded by the same constants. In this paper essentially all the probability measures we use will be absolutely continuous.}, under the assumption that a minimum principle holds (i.e the minimal value of the density is preserved at each step of the scheme), then the TV-JKO scheme 
\[ 
    \rho_0^\tau = \rho_0, \quad \rho_{k+1}^{\tau} \in \textrm{argmin} \left\{ \frac{1}{2 \tau} W_2^2 (\rho_k^\tau, \rho) + J(\rho) , \; \rho \in BV(\Omega) \cap \mathcal{P}  (\Omega )\right\} 
\]
where \(J\) is the total variation functional, converges to a weak solution of the TV Wasserstein gradient flow:
\begin{equation}
    \label{eq:TV-gradient-flow}
    \partial_{t} \rho + \Div \left[ \rho \nabla \left( \Div \Big(\frac{\nabla \rho}{\vert \nabla \rho \vert}\Big)\right)\right] = 0, \; \rho |_{t=0} = \rho_0.
\end{equation}
    
They also conjectured that this minimum principle holds and proved it in dimension $d=1$. However, generalizing their proof to higher dimensions is challenging, since their proof was based on the geodesic convexity of the set of measures whose density is lower-bounded by a given constant, which does not hold for $d>1$ (equivalently, one can consider the geodesic convexity of the functional \(\int_\T H(\rho)\), where \(H(s) = s^q\) with negative \(q\), which is also specific to the one-dimensional case).

Here, we circumvent this issue by first studying a gradient flow of a modified energy which artificially imposes a lower bound on the densities, and then proving a minimum principle for this equation. Importantly, we prove such a minimum principle at the level of the continuous PDE, and not of the JKO scheme. 

More precisely, we consider the case where the domain is the flat $d$-dimensional torus \(\T^d\) (abbreviated as \(\T\)), and consider first an approximated TV-JKO scheme:
\[
    \rho_0^\tau = \rho_0, \quad \rho_{k+1}^{\tau} \in \textrm{argmin} \left\{ \frac{1}{2 \tau} W_2^2 (\rho_k^\tau, \rho) +F_\epsilon(\rho) \right\}, 
\]
where
\begin{align*}
    F_\epsilon (\rho) = J(\rho) + \epsilon\int f(\rho), \quad f(s) = \begin{cases}
        \frac{1}{s - c} & \text{if } s > c \\
        +\infty & \text{otherwise}.
    \end{cases}
\end{align*}

Since the term \(\epsilon\int f(\rho)\) provides an artificial lower bound \(\rho \geq c\) at the level of the JKO step, and since this was the only missing information in \cite{carlier2019total}, the JKO scheme is expected to converge, which allows us to obtain a weak solution of the approximated TV gradient flow:
\begin{equation}
    \label{eq:approximated_TV_gradient_flow}
    \partial_{t} \rho + \Div \left[ \rho \nabla \left( \Div \Big(\frac{\nabla \rho}{\vert \nabla \rho \vert}\Big) - \epsilon f'(\rho) \right)\right] = 0.
\end{equation}

In order to obtain our result, we will consider the limit \(\epsilon \to 0\) and we first want to establish a minimum principle for the solution of \eqref{eq:approximated_TV_gradient_flow}.

Denote \( \Delta_1 \rho = \Div \Big(\frac{\nabla \rho}{\vert \nabla \rho \vert}\Big)\). We consider the decay of a functional \(\int_\T H(\rho)\) along the flow, where \(H\) is a smooth convex function. Under some regularity assumptions, we can compute 
\begin{equation}
    \begin{aligned}
        \frac{d}{dt} \int_\T H(\rho) &= \int_\T H'(\rho) \partial_t \rho \label{eq:decay_formal_computation}\\
        &= -\int_\T H'(\rho) \Div \left[ \rho \nabla \left( \Delta_1 \rho - \epsilon f'(\rho) \right)\right] \\
        &= \int_\T \rho H''(\rho) \nabla \rho \cdot \nabla \left( \Delta_1 \rho - \epsilon f'(\rho) \right) \\
        &\leq \int_\T \rho H''(\rho) \nabla \rho \cdot \nabla \Delta_1 \rho,
    \end{aligned}
\end{equation}
if we choose \(g\) such that \(g'(s) = s H''(s)\), denote \(p = g(\rho)\), then using \(\Delta_1 \rho = \Delta_1 p\), we have 
\[
    \int_\T \rho H''(\rho) \nabla \rho \cdot \nabla \Delta_1 \rho = \int_\T \nabla p \; \nabla \Delta_1 p. 
\]

We need to prove that the right-hand side above is negative. This will be done rigorously later in the paper, and we will explain which regularity do we need to perform such a computation (see Lemma~\ref{lem:inequality_rho_div_z}). So far, we just give a formal intuition of the inequality.
Set \(G(\mathbf{v}) = \vert \mathbf{v} \vert\). Formally, the 1-Laplacian can be written as 
\[
    \Delta_1 p = \nabla \cdot \left( \nabla G (\nabla p) \right)
\]
and
\begin{align*}
    \int_\T \nabla p \; \nabla \Delta_1 p &= \int_\T \;  \sum_i p_i \left(\sum_j \Big(G_j (\nabla p)\Big)_{ji}\right) \\
    &= -\int_\T \;  \sum_{i, j, k} p_{ij} G_{ik}(\nabla p) p_{kj} \\
    &= -\int_\T D^2 p : D^2 G : D^2 p \leq 0 .
\end{align*} 

This means that all quantities of the form $\int H(\rho_t)$ are non-increasing in $t$ whenever $H$ is convex. Applied to $H(s)=s^q$ for positive $q$, we obtain that every $L^q$ norm is non-increasing and, at the limit $q\to\infty$, we can see that upper bounds are preserved; on the other hand one can use $H(s)=s^q$ for negative $q$ and, taking $q\to-\infty$, preserve the $L^\infty$ norm of $\rho^{-1}$, i.e.\ preserve the lower bounds on the density.  

The estimates performed above on the PDE only require the function $H$ to be convex; a corresponding technique, called {\it flow interchange} technique (developed by McCann, Matthes, and Savare \cite{matthes2009family}), exists to perform a similar estimate along the iterations of the JKO scheme, but this requires geodesic convexity of the functional $\rho\mapsto \int H(\rho(x))dx$. This is mainly a technical requirement in order to compare a first-order expansion with a true increment. Unfortunately, the case of negative exponent $q$ is not included in this geodesic convexity assumption, so we cannot obtain a lower bound on the JKO scheme but only, a priori, on the continuous PDE. Yet, the difficulty is that we need the lower bound in order to prove the convergence of the JKO scheme, in order to obtain a solution to the PDE. This means that before passing to the continuous world, we need a minimum principle at the JKO level. 
We will see that our extra term \(\epsilon\int_\T  f(\rho)\) provides not only a lower bound allowing to pass to the limit in the JKO scheme, but also ensures sufficient regularity to perform the computations we need, and to make the above formal computation rigorous.

In the study of gradient flows, a standard and simplifying assumption consists in requiring that the initial datum has finite energy. This means assuming, in our case, $\rho_0\in BV$. If we want to get rid of this assumption, we need to prove a strong decay of the BV norm which makes it bounded far from $t=0$ independently of the initial datum. This leads to the following formal computation
\begin{align*}
    \frac{d}{dt} J(\rho) = \frac{d}{dt} \int_\T \vert \nabla \rho \vert = & \int_\T \frac{\nabla \rho}{\vert \nabla \rho \vert} \cdot \nabla \partial_t \rho \\
    = & \int_\T \Delta_1 \rho \; \Div \left[ \rho \nabla \left( \Delta_1 \rho - \epsilon f'(\rho) \right)\right] \\
    = & -\int_\T \rho \vert \nabla \Delta_1 \rho \vert^2  + \epsilon \int_\T \rho \nabla \Delta_1 \rho \cdot \nabla f'(\rho) \\
    \leq & -\int_\T \rho \vert \nabla \Delta_1 \rho \vert^2.
\end{align*}
The final inequality holds thanks to Lemma \ref{lem:inequality_rho_div_z} which will be proved later. We use upper and lower bounds of $\rho$, and then we use either the Poincaré inequality on $\Delta_1\rho$, or a Gagliardo-Nirenberg inequality which exploits that $\Delta_1\rho$ is the divergence of a vector field of unit norm, and obtain (see Section \ref{sec:existence_TV_flow})
\[
    \frac{d}{dt} J(\rho) \leq - C J^2(\rho),  \quad \frac{d}{dt} J(\rho) \leq - C J^4(\rho).
\]

Both these estimates provide a decay for the BV norm: the first one gives $J(\rho_t)\leq Ct^{-1}$, the second $J(\rho_t)\leq Ct^{-1/3}$. These bounds do not depend  on the initial BV norm but only on the lower and upper bounds of the density.  This property implies that we can use approximation to get a solution of the TV Wasserstein gradient flow for any initial density bounded from below and above. Both inequalities are true and have different applications: for $t\to\infty$ the strongest one is $J(\rho_t)\leq Ct^{-1}$ (whose validity is strongly related to the fact that our equation is set on a bounded domain; otherwise it would not hold and only the other one would be true, which is consistent with some explicit examples studied in \cite{carlier2019total} where $J(\rho_t)=O(t^{-1/3})$ in the whole space), while for $t\to 0$ the strongest one is $J(\rho_t)\leq Ct^{-1/3}$ (the fact that the exponent is strictly larger than $-1$ is crucial in order to obtain the continuity of the curve at $t=0$).

The paper is organized as follows. In Section \ref{sec:preliminaries}, we introduce some properties of BV functions, of Wasserstein gradient flows and of the JKO scheme. In section \ref{sec:minimum_principle}, we establish a minimum principle for the approximated TV Wasserstein gradient flow under certain regularity conditions. In Section \ref{sec:existence_approximated_TV_flow}, we study the approximated TV-JKO scheme and prove existence and regularity of a solution. In Section \ref{sec:existence_TV_flow}, we pass to the limit $\varepsilon\to 0$ in the PDEs, in order to obtain a weak solution of the TV Wasserstein gradient flow when the initial density is in BV. Then we prove the decay of the BV norm and use it to get a solution of the TV Wasserstein gradient flow for any initial density which is only bounded from below and above.

\section{Preliminaries}
\label{sec:preliminaries}

\subsection{BV functions}

\begin{definition}
   The total variation of a function \(\rho \in L^1(\mathbb T)\) is given by
        \[
            J(\rho) = \sup \left\{ \int_\T -\rho \Div z: \; z \in C^\infty (\T, \R^N) , \Vert z \Vert_\infty \leq 1  \right\},
        \]
    and \(BV (\T) = \{ \rho \in L^1(\T),J(\rho) < \infty \}\).
\end{definition}

Let us introduce the spaces
\begin{align*}
    H^1_{\Div} (\T) = \{ z \in L^2(\T, \R^d): \Div z \in L^2(\T) \} \quad &\text{with} \; \Vert z \Vert_{H^1_{\Div}} = \Vert z \Vert_{L^2} + \Vert \Div z \Vert_{L^2}, \\
    H^2_{\Div} (\T) = \{ z \in L^2(\T, \R^d): \Div z \in H^1(\T) \} \quad &\text{with} \; \Vert z \Vert_{H^2_{\Div}} = \Vert z \Vert_{L^2} + \Vert \Div z \Vert_{H^1}. 
\end{align*}

It can be proved that if \(\rho \in BV(\T) \cap L^2 (\T)\), then the subdifferential of \(J\) at \(\rho\) is given by:
\[
    \partial_{L^2} J(\rho) = \{-\Div z : z \in H^1_{\Div} (\T), \; \Vert z \Vert_\infty \leq 1, \;J(\rho) \leq -\int_\T \rho \Div z \},
\]
see \cite{chambolle2010introduction}[2.2.1] for details. We denote 
\[
    \mathcal{A} = \{ (\rho, z); \; \rho \in BV(\T) \cap L^2(\T), \; z \in H^1_{\Div} (\T), \; \Vert z \Vert_\infty \leq 1, \; 
    J(\rho) \leq -\int_\T \rho \Div z \}.
\]

\begin{remark}
    The \(BV\) space can also be characterized as the set of those \(L^1\) functions whose distributional gradient is a finite vector Radon measure. We can write:
        \[
            J(\rho) = \int_\T \vert \nabla \rho \vert = \int_\T \langle z, D \rho\rangle \quad \text{if} \; (\rho,z ) \in \mathcal{A} .
        \]
\end{remark}

Next we introduce two lemmata concerning the relation between a density \(\rho\) and its corresponding optimal vector fields \(z\).

\begin{lemma}
    \label{lem:global_contrast_change}   
    Given a $C^1$ function $g$ such that $g'$ is bounded and strictly positive, then for any \((\rho, z) \in \mathcal{A}\), the pair \((g(\rho), z)\) is also in \(\mathcal{A}\).
\end{lemma}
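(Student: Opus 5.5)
We need to show that if $(\rho,z)\in\mathcal A$ then $(g(\rho),z)\in\mathcal A$. The conditions on $z$ ($z\in H^1_{\Div}$, $\|z\|_\infty\le 1$) do not involve $\rho$, so only two things must be checked: that $g(\rho)\in BV\cap L^2$, and the inequality $J(g(\rho))\le -\int_\T g(\rho)\Div z$. Membership is easy. Since $g'$ is bounded, $g$ is Lipschitz with constant $L=\sup g'$. Hence $|g(\rho)|\le |g(0)|+L|\rho|$, which gives $g(\rho)\in L^2$ on the compact torus. The chain rule for BV functions composed with Lipschitz maps then gives $J(g(\rho))\le L\,J(\rho)<\infty$.

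For the inequality, I would first reduce it to an equality characterization. For any $u\in BV\cap L^2$ and any admissible $z$, the definition of $J$ together with density of smooth fields gives $-\int u\Div z\le J(u)$. (To pass from smooth $z$ to $H^1_{\Div}$ fields with $\|z\|_\infty\le1$, mollify $z$; this keeps $\|z\|_\infty\le 1$ and converges in $H^1_{\Div}$, while $u\in L^2$.) So $(\rho,z)\in\mathcal A$ means $-\int\rho\Div z=J(\rho)$, i.e. the Anzellotti pairing measure $(z,D\rho)$ coincides with $|D\rho|$. Here I use the generalized Green formula $-\int \rho\Div z=\int (z,D\rho)$, together with $|(z,D\rho)|\le |D\rho|$ as measures. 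The goal is then the analogous statement for $g(\rho)$.

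The key step is to relate $(z,Dg(\rho))$ to $(z,D\rho)$. For a smooth approximation I would argue as follows. Take $\rho_n$ smooth with $\rho_n\to\rho$ strictly in BV and in $L^2$. Then $\nabla g(\rho_n)=g'(\rho_n)\nabla\rho_n$, so
\[
-\int g(\rho_n)\Div z=\int g'(\rho_n)\, z\cdot\nabla\rho_n .
\]
Since the conclusion must hold for the limit, a cleaner route uses the BV chain rule structure. On the diffuse part, $Dg(\rho)=g'(\tilde\rho)\,D\rho$. On the jump part it equals $\frac{g(\rho^+)-g(\rho^-)}{\rho^+-\rho^-}D^j\rho$. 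In both cases $Dg(\rho)=\theta\,D\rho$ with a Borel density $\theta\in[\inf g',\sup g']$, which is strictly positive. Anzellotti's theory gives $(z,Du)=\Theta(z,Du)|Du|$, where the density $\Theta(z,Du)(x)$ depends only on $z$ and on the polar direction $\frac{dDu}{d|Du|}(x)$ (this is the known result of Anzellotti for such compositions). Since $\theta>0$, $Dg(\rho)$ and $D\rho$ have the same polar direction $|D\rho|$-a.e., and $|Dg(\rho)|$ is absolutely continuous with respect to $|D\rho|$. Thus $\Theta(z,Dg(\rho))=\Theta(z,D\rho)=1$, which is exactly where $g'>0$ is needed. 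Therefore $-\int g(\rho)\Div z=\int (z,Dg(\rho))=|Dg(\rho)|(\T)=J(g(\rho))$.

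The main obstacle is this last step: justifying that the pairing density depends only on the direction of the gradient measure, including on jump and Cantor parts. If I want to avoid citing Anzellotti's directional result, I would use a level-set alternative. By the coarea formula, $J(\rho)=\int_\R \Per(\{\rho>s\})\,ds$. Using $-\int\rho\Div z=\int_\R\left(-\int\mathbf 1_{\{\rho>s\}}\Div z\right)ds$ and the termwise bound $-\int \mathbf 1_{\{\rho>s\}}\Div z\le\Per(\{\rho>s\})$, equality in $\mathcal A$ forces equality for a.e. $s$. Since $g$ is increasing, $\{g(\rho)>t\}=\{\rho>g^{-1}(t)\}$. Applying the coarea formula to $g(\rho)$ with the change of variables $t=g(s)$ gives $-\int g(\rho)\Div z=\int g'(s)\Per(\{\rho>s\})\,ds=J(g(\rho))$. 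This version is elementary; it needs only the layer-cake formula $g(\rho)=g(m)+\int_m^\infty g'(s)\mathbf 1_{\{\rho>s\}}\,ds$ for $\rho$ bounded below by $m$ (or a truncation argument otherwise) and Fubini. It is the route I would actually write.
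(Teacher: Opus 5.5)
Your level-set route is correct and is essentially the paper's argument. The paper cites Proposition 3 of Chambolle et al.\ to characterize $(\rho,z)\in\mathcal A$ by the equality $\Per(\{\rho\ge t\})=-\int_{\{\rho\ge t\}}\Div z$ on the level sets, and then notes that $g(\rho)$ has the same level sets as $\rho$; you instead derive this characterization yourself (for a.e.\ level) from the coarea formula, the layer-cake formula and Fubini, and you carry out the reparametrization $t=g(s)$ explicitly, so the Anzellotti-pairing discussion can simply be dropped.
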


\begin{proof}
    Using the same argument as in Proposition 3 in \cite{chambolle2016geometric}, we see that we have \((\rho, z) \in \mathcal{A}\) if and only if the level sets of \(\rho\) satisfy for every $t$,
    \[
    \quad \Per(\{\rho \geq t\}) = \int_{\{\rho \geq t\}} \Div z.
    \]
    Since \(g(\rho)\in BV(\T)\) and the level set of \(g(\rho)\) are the same as \(\rho\), we have \((g(\rho), z) \in \mathcal{A}\).
\end{proof}

The second lemma shows that under some regularity
condition, a certain inequality involving the derivatives of \(\rho\) and \(\Div z\) holds.

\begin{lemma}\label{DrhoDz}
    \label{lem:inequality_rho_div_z} 
    Given \((\rho, z) \in \mathcal{A}\), assume \(\rho \in H^1 (\T)\), \(z \in H^2_{\Div}(\T)\); then 
    \[
        \int_\T \nabla \rho \cdot \nabla \Div z \leq 0.
    \]
\end{lemma}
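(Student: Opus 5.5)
The plan is to use a monotonicity argument based on difference quotients rather than the formal second-order computation. Since $\nabla\Div z\in L^2$ and $\rho\in H^1$, we have $\int_\T \nabla\rho\cdot\nabla\Div z = \lim_{h\to0}\int_\T \frac{\rho(x+h e)-\rho(x)}{|h|}\,\cdot$ (suitably summed over coordinate directions). Integrating by parts on the torus, it is cleaner to write
\[
\int_\T \nabla \rho\cdot\nabla \Div z = -\int_\T \Delta\rho\,\Div z
\]
only formally. Instead I will use translations $\tau_h\rho(x)=\rho(x+h)$. For each fixed vector $h$,
\[
\int_\T (\tau_h\rho-\rho)(\tau_h\Div z-\Div z)\,dx = \int_\T \big(\tau_h\rho\,\tau_h \Div z + \rho\Div z\big) - \int_\T\big(\tau_h\rho\,\Div z+\rho\,\tau_h\Div z\big).
\]
Translation invariance of the torus and of the admissible class $\mathcal A$ gives $(\tau_h\rho,\tau_h z)\in\mathcal A$. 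Hence $\int \tau_h\rho\,\tau_h\Div z=\int\rho\Div z=-J(\rho)$, using that the defining inequality in $\mathcal A$ is in fact an equality: the reverse inequality $-\int\rho\Div z\le J(\rho)$ follows from the definition of $J$ and $\|z\|_\infty\le1$, after a density argument for $H^1_{\Div}$ fields.

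For the cross terms, I will use that $\tau_h z$ is admissible in the sup defining $J(\rho)$ (again by density), so $-\int\rho\,\tau_h\Div z\le J(\rho)$. Likewise $-\int \tau_h\rho\,\Div z\le J(\tau_h\rho)=J(\rho)$. Combining,
\[
\int_\T(\tau_h\rho-\rho)(\tau_h\Div z-\Div z)\le -2J(\rho)+2J(\rho)=0 .
\]
This is the discrete counterpart of the monotonicity of the subdifferential $\partial J$: both $-\Div z$ and $-\tau_h\Div z$ are subgradients at $\rho$ and $\tau_h\rho$ respectively, so $\langle -\Div \tau_h z+\Div z,\tau_h\rho-\rho\rangle\ge0$. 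I could simply invoke monotonicity of $\partial_{L^2}J$ together with the characterization of the subdifferential recalled above.

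Next I take $h=s e_i$, divide by $s^2$, and let $s\to0$. Because $\rho\in H^1$, the quotient $(\tau_{se_i}\rho-\rho)/s\to\partial_i\rho$ strongly in $L^2$. Because $\Div z\in H^1$, the same holds for $\Div z$, converging to $\partial_i\Div z$. The product of two strongly converging $L^2$ sequences converges in $L^1$, so we get $\int\partial_i\rho\,\partial_i\Div z\le0$. Summing over $i$ gives the claim.

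The main obstacle is justifying that $\Div(\tau_h z)$ is a valid competitor, i.e.\ that $-\int\rho\Div w\le J(\rho)$ for $w\in H^1_{\Div}$ with $\|w\|_\infty\le1$ and $\rho\in BV\cap L^2$, not only for smooth $w$. I would handle this by mollifying: $w_\delta=w*\eta_\delta$ is smooth with $\|w_\delta\|_\infty\le1$, and $\Div w_\delta=(\Div w)*\eta_\delta\to\Div w$ in $L^2$. Pairing with $\rho\in L^2$ then passes to the limit. Everything else is routine.
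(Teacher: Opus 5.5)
Your argument is correct and is essentially the paper's proof in finite-difference form: with $j(h)=-\int_\T \rho\,\Div z(\cdot+h)$, which the paper shows is maximal at $h=0$, your quantity $\int_\T(\tau_h\rho-\rho)(\tau_h\Div z-\Div z)$ equals $j(h)+j(-h)-2j(0)\le 0$, and dividing by $s^2$ and letting $s\to0$ gives the paper's conclusion $\mathrm{tr}\,D^2j(0)\le 0$ without computing $D^2 j$. Your mollification step also makes explicit why the translated field $z(\cdot+h)$ is an admissible competitor in the definition of $J$, which the paper uses without comment.
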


\begin{proof}
    Denote \(Z = \Div z\), and the assumption provides $Z \in H^1$. For any \(h \in \R^d\), set \(Z_h (x) = Z(x + h)\) and $j(h):=\int_\T \rho Z_h$. Then, we have 
    \[
        j(0) = - \int_\T \rho Z = \int |\nabla \rho|\geq -\int_\T \rho Z_h = j(h),
    \]
    so that $0$ is a maximal point of $j$.
    Thanks to the regularity of \(\rho\) and \(Z\), we have
    \[
        D j(h) = -\int_\T \rho \nabla Z_h =  -\int_\T \rho_{-h} \nabla Z, \quad D^2 j(h) = \int_\T \nabla \rho_{-h} \otimes \nabla Z.
    \]
    The maximality of \(0\) provides \(\mathrm{tr} \; D^2 j(0) \leq 0\), which proves the lemma.
\end{proof}

\subsection{Wasserstein gradient flows and the JKO scheme}

In this section, we give a heuristic explanation of the JKO scheme. We refer to  \cite{santambrogio2015optimal} and \cite{SurveyGF}, for a more detailed exposition.

Given a compact domain, here \(\T\), and a functional \(G: \mathcal{P} (\T) \to \R \cup \{+\infty\}\), we consider the iterated minimization scheme:
\[
    \rho^\tau_{k+1} \in \mathrm{argmin}_{\rho \in \Pc(\T)} \left\{ \frac{1}{2 \tau} W_2^2(\rho_k^\tau, \rho) + G(\rho)\right\}.
\]

Assuming that the first variation of \(\frac{\delta G}{\delta \rho}(\rho)\) of \(G\) exists, then any optimal measure \(\bar\rho\) in the above optimization problem satisfies
\[
    \frac{\delta G}{\delta \rho}(\bar\rho) + \frac{\varphi}{\tau} = \mathrm{const},
\]
where \(\varphi\) is the Kantorovich potential between \(\bar\rho\) and \(\rho^\tau_k\).

Notice that the optimal map \(T\) is equal to \(\id - \nabla \varphi\), so we have:
\[
    \frac{T - \id}{\tau} = - \frac{\nabla \varphi}{\tau} = \nabla \left( \frac{\delta G}{\delta \rho}(\bar\rho)  \right) .
\]

Denote \(\mathbf{v} = -\frac{T - \id}{\tau} = -\nabla \left( \frac{\delta G}{\delta \rho}(\bar\rho)  \right)\), which is an approximation of the velocity of the curve that one obtains at the limit. Thanks to the continuity equation, when \(\tau \to 0\), we expect to find a solution of 
\[
    \partial_t \rho - \nabla \cdot \left(\rho \nabla \left( \frac {\delta G}{\delta \rho}(\rho)  \right)\right) = 0,
\]
which is the Wasserstein gradient flow of the energy functional \(G\).

We also note that the norm of the velocity field $\mathbf{v}_t$ in $L^2(\rho_t)$ provides the so-called metric derivative $|\rho'|(t)$ of the curve $\rho$ in the space $W_2$ and that the gradient flow structure of the equation allows, by looking at the dissipation of $G$ along the curve $\rho$, to obtain 
\[
    \int_0^T \int_\T \left|\nabla \left( \frac {\delta G}{\delta \rho}(\rho_t) \right)\right|^2\de\rho_t\de t=\int_0^T|\rho'|(t)^2\de t\leq G(\rho_0)-G(\rho_T).  
\]
Whenever $G\geq 0$ the right-hand side can of course be bounded by $G(\rho_0)$, which only depends on the initial datum and provides useful uniform bounds. In particular, the $L^2$ integrability of the metric derivative implies $C^{0,\frac12}$ uniform continuity of the curve $\rho$ in $W_2$ metric.

In our case, the energy functional \(G\) is \(J(\rho) + \epsilon\int  f(\rho)\), hence we expect it will provide a solution of 
\[
    \partial_{t} \rho + \Div \left[ \rho \nabla \left( \Div \Big(\frac{\nabla \rho}{\vert \nabla \rho \vert}\Big) - \epsilon f'(\rho)\right)\right] = 0
\]
in the weak sense, where \(f\), we recall, is defined as
\[
    f(s) = \begin{cases}
        \frac{1}{s - c} & \text{if } s > c \\
        +\infty & \text{otherwise}.
    \end{cases}
\]
for some \(c > 0\). That is

\begin{definition}\label{defisol}
    A weak solution of \eqref{eq:approximated_TV_gradient_flow} is a function \(\rho \) such that
    \[\rho \in L^\infty((0, T); BV(\T)\cap L^\infty(\T)) \cap C^0([0, T]; (\Pc(\T), W_2)), \quad \rho > c \; \text{a.e.} \]
    and for which there exists \(z \in L^\infty((0,T) \times \T)\) satisfying 
    \begin{align*}
        &(\rho(t, \cdot), z(t, \cdot)) \in \mathcal{A} \quad \text{for a.e.\ } t \in (0, T), \\
        &\Div z - \epsilon f'(\rho) \in L^2((0, T); H^1(\T)),
    \end{align*}
    and such that the pair \((\rho, z)\) is a weak solution of
    \[
        \partial_{t} \rho + \Div \left[ \rho \nabla \left( \Div z - \epsilon f'(\rho) \right)\right] = 0; \quad \rho(0) = \rho_0,
    \]
    i.e.\ for any \(u \in C^\infty_c([0, T) \times \T)\), 
    \[
        \int_0^T \int_\T \left( \rho \partial_t u + \rho \nabla  (\Div z - \epsilon f'(\rho)) \cdot \nabla u  \right) \de x \de t= - \int_\T \rho_0(x) u(0, x) \de x.
    \]
\end{definition}

As for the TV Wasserstein gradient flow \eqref{eq:TV-gradient-flow}, we define its weak solution in a similar way, by replacing \(\epsilon f\) with \(0\). Note that in this case the condition $\Div z - \epsilon f'(\rho) \in L^2((0, T); H^1(\T))$ becomes $z\in L^2 ((0,T);H^2_{\Div}(\T))$.

\section{Minimum Principle for the approximated TV gradient flow}
\label{sec:minimum_principle}

Under some regularity conditions on \(\rho\) and \(\Div z\), we obtain a minimum principle for the approximated TV Wasserstein gradient flow.

\begin{theorem}
    \label{thm:regularity_to_minimum_principle}
    With a fixed \(\epsilon >0\), suppose that a pair \((\rho, z)\) with \(\rho > c\) is a weak solution of \eqref{eq:approximated_TV_gradient_flow} with initial condition \(\rho_0 \in \mathcal{P}  (\T) \cap L^\infty(\T)\), and that it satisfies the regularity assumptions
    \[
        \rho \in L^2((0, T); H^1(\T)), \quad z \in L^2((0, T); H^2_{\Div}(\T))
    \]
    then for any \(\alpha> c\), if \(\rho_0 \geq \alpha\), it follows that we also have \(\rho \geq \alpha\).
\end{theorem}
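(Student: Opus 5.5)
We make rigorous the formal computation \eqref{eq:decay_formal_computation} with a convex $H$ that penalizes values below $\alpha$, and show that $\int_\T H(\rho_t)$ is non-increasing. Since it vanishes at $t=0$, it vanishes for all $t$. Concretely, fix $\alpha>c$ and take $H(s)=((\alpha-s)_+)^2$, or a $C^2$ convex regularization of it with bounded second derivative, vanishing on $[\alpha,\infty)$ and positive on $(c,\alpha)$. Then $\int_\T H(\rho_0)=0$, and the goal is $\int_\T H(\rho_t)=0$ for all $t$, which gives $\rho\ge\alpha$ a.e.

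\textbf{Step 1: time derivative via duality.} From the weak formulation, $\partial_t\rho=-\Div[\rho\nabla w]$ with $w:=\Div z-\epsilon f'(\rho)\in L^2(0,T;H^1)$. Since $\rho\in L^\infty$, we get $\rho\nabla w\in L^2_{t,x}$, so $\partial_t\rho\in L^2(0,T;H^{-1})$. The test function $H'(\rho)$ lies in $L^2(0,T;H^1)$, because $H'$ is Lipschitz and $\rho\in L^2H^1$. The standard Lions--Magenes chain rule then says that $t\mapsto\int_\T H(\rho_t)$ is absolutely continuous, with
\[
\int_\T H(\rho_t)-\int_\T H(\rho_0)=-\int_0^t\int_\T \rho H''(\rho)\nabla\rho\cdot\nabla w .
\]
To justify this, I would mollify in time (or in space and time) and pass to the limit. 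The $W_2$-continuity of $\rho$ identifies the initial trace. I expect this step to be the main technical obstacle. The other delicate point is that $\nabla f'(\rho)$ only makes sense as part of $w$, which is handled in Step 2 by working only on the region $\{\rho<\alpha\}$.

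\textbf{Step 2: sign of the two terms.} Split $w$ into its two pieces. For the $\epsilon$-term the integrand is $\epsilon\rho H''(\rho)f''(\rho)|\nabla\rho|^2\ge0$, since $f''>0$ on $(c,\infty)$. This contributes with the favourable sign: after the minus sign it gives $-\epsilon\int\rho H'' f''|\nabla\rho|^2\le0$. Splitting $w$ pointwise requires care, because $f'(\rho)$ may fail to be in $H^1$ where $\rho$ is close to $c$. However, $H''$ vanishes on $\{\rho\ge\alpha\}$, and I would restrict to the region $\rho<\alpha$ by truncation. Alternatively, I would note that $\rho\,H''(\rho)\nabla\rho=\nabla g(\rho)$ with $g'(s)=sH''(s)$, where $g$ is Lipschitz, and keep everything in terms of $\nabla g(\rho)\cdot\nabla w$. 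On $\{\rho\le c'\}$ the term still has a sign whenever it is defined, and it can be discarded through an approximation $f_\delta$ of $f'$ truncated near $c$.

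For the TV term, $-\int\nabla g(\rho)\cdot\nabla\Div z$, I apply Lemma~\ref{lem:global_contrast_change} to $g$ to get $(g(\rho),z)\in\mathcal A$. One subtlety is that $g'=sH''$ vanishes for $s\ge\alpha$, so it is not strictly positive. I handle this by replacing $g$ with $g+\eta\,\mathrm{id}$, $\eta>0$, and letting $\eta\to0$; the extra term is $\eta\int\nabla\rho\cdot\nabla\Div z\le0$ anyway. Since $g(\rho)\in H^1$ and $z\in H^2_{\Div}$ for a.e.\ $t$, Lemma~\ref{lem:inequality_rho_div_z} gives $\int\nabla g(\rho)\cdot\nabla\Div z\le0$, so this term also contributes non-positively.

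\textbf{Step 3: conclusion.} Combining the two steps, $\int_\T H(\rho_t)\le\int_\T H(\rho_0)=0$ for every $t$. Hence $\rho_t\ge\alpha$ a.e.
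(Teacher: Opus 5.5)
Your strategy is the paper's. You differentiate $\int_\T H(\rho_t)$ along the flow using $\partial_t\rho\in L^2(0,T;H^{-1})$, discard the $\epsilon$-term via $f''>0$, and make the TV term non-positive via Lemma~\ref{lem:global_contrast_change} and Lemma~\ref{lem:inequality_rho_div_z} applied to $g(\rho)$ with $g'(s)=sH''(s)$. The only structural difference is the choice of $H$. The paper proves monotonicity for convex $H$ (with a monotone-approximation step) and then uses $H_q(s)=s^{-q}$ with $q\to\infty$. You use a single truncation-type $H$ vanishing on $[\alpha,\infty)$, which avoids both the approximation step and the $L^q\to L^\infty$ limit. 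Your $g+\eta\,\mathrm{id}$ device also addresses the strict positivity required in Lemma~\ref{lem:global_contrast_change}, which the paper leaves implicit for general $H$. This is a legitimate and slightly cleaner variant.

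However, the sign bookkeeping at the crucial step is wrong as written. From the weak formulation, $\langle\partial_t\rho,\psi\rangle=\int_\T\rho\nabla w\cdot\nabla\psi$, so the correct identity is
\[
\int_\T H(\rho_t)-\int_\T H(\rho_0)=+\int_0^t\int_\T\rho H''(\rho)\nabla\rho\cdot\nabla w,
\]
not the one with a minus sign. With your minus sign, the TV contribution is $-\int_\T\nabla g(\rho)\cdot\nabla\Div z$, which Lemma~\ref{lem:inequality_rho_div_z} makes \emph{non-negative}. So your claim that it ``contributes non-positively'' does not follow. In the $\epsilon$-term you made a second, compensating slip: since $w$ contains $-\epsilon f'(\rho)$, the integrand of $\rho H''(\rho)\nabla\rho\cdot\nabla w$ there is $-\epsilon\rho H''(\rho)f''(\rho)|\nabla\rho|^2\le 0$, not $+$. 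With the plus sign both pieces are $\le 0$ and Step~3 goes through.

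Two smaller remarks. First, splitting $w$ needs no truncation of $f'$. Indeed $w\in L^2(0,T;H^1)$ by the definition of weak solution, and $\Div z\in L^2(0,T;H^1)$ by hypothesis, so $\epsilon f'(\rho)=\Div z-w\in L^2(0,T;H^1)$ automatically. Second, in the $\eta$-argument the sign of $\eta\int_\T\nabla\rho\cdot\nabla\Div z$ does not help you. You get $\int_\T\nabla g(\rho)\cdot\nabla\Div z\le-\eta\int_\T\nabla\rho\cdot\nabla\Div z$, whose right side is $\ge 0$; what closes the argument is that this term tends to $0$ as $\eta\to 0$.
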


\begin{proof}[Proof of Theorem \ref{thm:regularity_to_minimum_principle}]
    We denote \(\rho_t = \rho(t, \cdot)\), considering it as a function of \(x\). Consider a smooth convex function \(H(s)\) defined on \(\R^+\), such that $H$ and $H'$ are globally Lipschitz continuous.

    From equation \eqref{eq:approximated_TV_gradient_flow}, we have \(\partial_t \rho \in L^2((0, T); H^{-1} (\T))\). Using the regularity assumption on \(\rho\), the computations made in \eqref{eq:decay_formal_computation} are rigorous, and we get the following inequality:
    \[
        \partial_t \int_\T H(\rho) \leq \int_\T \rho H''(\rho) \nabla \rho \cdot \nabla \Div z.
    \]
    Choose a suitable function \(g\in C^1(\R)\) satisfying the condition of Lemma \ref{lem:global_contrast_change} and \(\nabla g(\rho) = \rho H''(\rho) \nabla \rho\), then set \(p = g(\rho)\), we have  
    \[
        \partial_t \int_\T H(\rho_t) \leq \int_\T \nabla p_t \cdot \nabla \Div z_t \leq 0 ,
    \]
    thanks to Lemma \ref{lem:global_contrast_change} and \ref{lem:inequality_rho_div_z}. This shows that the integral $\int_\T H(\rho_t)$ is a non-increasing function of $t$ whenever $H$ is convex and $H,H'$ are Lipschitz. By monotone approximation, the same is also true for every convex function $H$. In particular for any \(q > 0\), we may take \(H_q(s) = s^{-q}\). We deduce
    \[
        \frac{\mathrm{d}}{\mathrm{d t}} \Vert 1/\rho \Vert_{L^q}^q \leq 0,
    \]
    and hence 
    \[
        \Vert 1/\rho_t \Vert_{L^q}^q \leq \Vert 1/\rho_0 \Vert_{L^q}^q .
    \]
    Taking the limit \(q \to \infty\), we obtain
    \[
        \frac{1}{\inf_\T \rho_t} \leq \frac{1}{\inf_\T \rho_0},
    \]
    which shows $\rho_t\geq\alpha$ and proves the desired minimum principle.
\end{proof}

\section{Existence in the approximated TV gradient flow for BV initial data}
\label{sec:existence_approximated_TV_flow}

In this section, for fixed \(\epsilon >0\), we study the approximated TV-JKO scheme with initial data \(\rho_0\) satifying 
\[
    \rho_0 \in \mathcal{P}  (\T) \cap BV(\T), \quad \rho_0 \geq \alpha \; \text{for some } \alpha > c.
\]

We will use the scheme to obtain a weak solution of the approximated TV Wasserstein gradient flow \eqref{eq:approximated_TV_gradient_flow} with sufficient regularity to apply Theorem \ref{thm:regularity_to_minimum_principle}.

\subsection{One step of the approximated TV-JKO scheme}
\label{sec:one_step}
\begin{equation}
    \label{eq:one_step}
    \rho_1 = \textrm{argmin}_{\rho \in \mathcal{P} (\T)}  \left\{ \frac{1}{2 \tau} W_2^2(\rho_0, \rho) + J(\rho) + \epsilon \int_\T f(\rho) \right\}.
\end{equation}

\subsubsection{Euler-Lagrange equation}

Firstly, we follow the proof strategy of \cite{carlier2019total}[Proposition 3.1], which proves that the minimizer of the following minimization problem:
\[
    \min_{\rho \in \mathcal{P} (\T)}  \left\{ \frac{1}{2 \tau} W_2^2(\rho_0, \rho) + J(\rho) + \epsilon \int_\T \rho \log \rho \right\}
\]
is bounded away from \(0\). The lower bound depends on \(\tau\), but it is enough so as to compute the optimality conditions charcterizing such a minimizer in terms of an Euler-Lagrange equation. Note that also in the above problem the quantity to minimize includes a term of the form $\epsilon \int_\T f(\rho)$, and the optimal $\rho$ is bounded away from $0$ as a consequence of $f'(0)=-\infty$. In \eqref{eq:one_step}, since $f'(c)=-\infty$, an easy adaptation of the same argument proves that the optimal \(\rho_1\) stays uniformly away from \(c\) (again, the lower bound depends on \(\tau\)). The point to be adapted is the construction in \cite{carlier2019total}[Proposition 3.1, Figure 2], and we build now a competitor that remains separated from \(c\) and compute the corresponding energy difference. The differences estimates for the Wasserstein term and the total variation term are identical to those in \cite{carlier2019total}[(3.6), (3.7)]. Moreover, since \(f\) is decreasing, the relevant estimation needed for the potential energy term are even simpler to compute, which allows us to prove the desired result.

Secondly, we observe that strict positivity of \(\rho_1\) ensures the existence of a unique Kantorovich potential in the transport problem from $\rho_1$ to $\rho_0$, and hence of a first variation for \(W^2_2(\rho_0,\cdot)\), see \cite[Section 7.2.2]{santambrogio2015optimal} for details. Since \(\rho_1\) stays away from \(c\), the first variation for functional \(\int f (\rho) \) at \(\rho_1\) is equal to \(f'(\rho_1) \). Since both term \(W^2_2(\rho_0,\cdot)\) and \(\int f (\rho) \) are differentiable, we have the following optimality condition which characterizes \(\rho_1\):
\begin{equation}
    \label{eq:one_step_characterization}
    \frac{\varphi_1 }{\tau} - \Div z_1 + \epsilon f'(\rho_1)= 0 \quad \text{a.e.\ in } \T,
\end{equation}
where \(\varphi_1\) is the Kantorovich potential between \(\rho_1\) and \(\rho_0\), and \((\rho_1, z_1) \in \mathcal{A}\).

\subsubsection{Maximum principle}
\label{sec:maximum_principle}
Let \(K = \{\rho \in \mathcal{P} (\T): \rho \leq \Vert \rho_0 \Vert_{L^\infty(\T)} \; a.e.\}\) be the set of probability measures which are absolutely continuous, with density bounded from above by $\Vert \rho_0 \Vert_{L^\infty(\T)}$. We consider the \(W_2\) projection of \(\rho_1\) to \(K\), denoted as \(\hat{\rho}_1\). By Theorem 4.2 and Remark 4.3 in \cite{carlier2019total}, all the three terms in \eqref{eq:one_step} decrease when replacing \(\rho_1\) by \(\hat{\rho}_1\). The fact that \(\rho_1\) is the unique minimizer (uniqueness comes, for instance from the strict convexity of $f$) implies \(\rho_1 \in K\) i.e.\ \(\rho_1 \leq \Vert \rho_0 \Vert_{L^\infty(\T)}\). 

\subsubsection{Regularity}

\begin{theorem}
    \label{thm:one_step_regularity}
    \(\rho_1, f'(\rho_1)\) are in \(H^1(\T)\) with the estimate:
    \begin{align}
        \label{eq:estimate_1}
        \Vert \nabla f'(\rho_1) \Vert_{L^2} &\leq \frac{1}{\epsilon} \Vert \nabla \left[-\Div z_1 + \epsilon f'(\rho_1) \right] \Vert_{L^2}, \\
        \label{eq:estimate_2}
        \Vert \nabla \Div z_1 \Vert_{L^2} &\leq \Vert \nabla \left[-\Div z_1 + \epsilon f'(\rho_1) \right] \Vert_{L^2}.
    \end{align}
\end{theorem}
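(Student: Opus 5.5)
We start from the optimality condition \eqref{eq:one_step_characterization}, which gives $-\Div z_1+\epsilon f'(\rho_1) = -\varphi_1/\tau$ a.e. Since $\varphi_1$ is a Kantorovich potential on the compact torus, it is Lipschitz (indeed semiconcave, $c$-concave with $c=\frac12 d^2$), so $\varphi_1\in W^{1,\infty}\subset H^1(\T)$. Hence the combination $w:=-\Div z_1+\epsilon f'(\rho_1)$ belongs to $H^1(\T)$ with $\nabla w = -\nabla\varphi_1/\tau$. The whole point is then to \emph{split} this $H^1$ regularity between the two pieces $\Div z_1$ and $\epsilon f'(\rho_1)$, which is exactly what \eqref{eq:estimate_1}--\eqref{eq:estimate_2} express: formally, $\|\nabla w\|^2 = \|\nabla \Div z_1\|^2+\epsilon^2\|\nabla f'(\rho_1)\|^2 - 2\epsilon\int \nabla\Div z_1\cdot\nabla f'(\rho_1)$, and the cross term is $\ge 0$ by Lemma~\ref{lem:inequality_rho_div_z} combined with Lemma~\ref{lem:global_contrast_change}, since $f'$ is increasing on $(c,\infty)$ (as $f''>0$), so $(f'(\rho_1),z_1)\in\mathcal A$ (after truncating $f'$ to have bounded positive derivative, legitimate because $\rho_1$ lies in a compact subset of $(c,\infty)$ by the lower bound from the Euler--Lagrange step and the maximum principle of Section~\ref{sec:maximum_principle}).

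The obstacle is that Lemma~\ref{lem:inequality_rho_div_z} requires a priori $f'(\rho_1)\in H^1$ and $\Div z_1\in H^1$, which is what we want to prove. I would avoid this circularity by redoing the proof of Lemma~\ref{lem:inequality_rho_div_z} with finite differences rather than derivatives. Set $p=f'(\rho_1)$, $Z=\Div z_1$, both in $L^\infty$ (indeed $Z = \epsilon p - w$ is bounded). For $h\in\R^d$ write $\delta_h u = u(\cdot+h)-u$. Since $(p,z_1)\in\mathcal A$ and $(p_h,(z_1)_h)\in\mathcal A$ (translation invariance), the maximality argument gives $-\int p Z_h\le -\int pZ$ and $-\int p_hZ\le -\int p_h Z_h$; summing, $\int \delta_h p\,\delta_h Z \ge 0$. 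Then from $\delta_h w = -\delta_h Z+\epsilon\delta_h p$ we get
\[
\|\delta_h w\|_{L^2}^2 = \|\delta_h Z\|_{L^2}^2+\epsilon^2\|\delta_h p\|_{L^2}^2-2\epsilon\int\delta_h Z\,\delta_h p \ge \|\delta_h Z\|_{L^2}^2+\epsilon^2\|\delta_h p\|_{L^2}^2 .
\]
Dividing by $|h|^2$ and using $\|\delta_h w\|_{L^2}\le |h|\,\|\nabla w\|_{L^2}$, the difference quotients of $Z$ and $p$ are bounded in $L^2$, so $Z,p\in H^1$ by the standard characterization, and letting $h\to0$ (along each direction, and summing over an orthonormal basis, or directly via weak lower semicontinuity) yields $\|\nabla Z\|^2+\epsilon^2\|\nabla p\|^2\le\|\nabla w\|^2$, which gives both \eqref{eq:estimate_1} and \eqref{eq:estimate_2}.

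Finally, $\rho_1\in H^1$ follows from $p=f'(\rho_1)\in H^1$: since $\rho_1$ takes values in $[c+\delta,\|\rho_0\|_\infty]$ where $f''\ge \kappa>0$, the inverse $(f')^{-1}$ is Lipschitz on the range of $p$, so $\rho_1=(f')^{-1}(p)\in H^1$ by the chain rule. The main delicate point I expect is the finite-difference monotonicity inequality, in particular checking that the pairing $\int pZ$ with $p\in BV\cap L^\infty$ and $Z\in L^\infty$ matches the definition of $\mathcal A$ (which is fine since $p\in L^2$ and $z_1\in H^1_{\Div}$).
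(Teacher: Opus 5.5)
Your argument is correct once a sign slip (noted below) is fixed, and it takes a genuinely different, more direct route than the paper.

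\textbf{Comparison with the paper.} The paper works in two stages. First it recognises $u_1=\epsilon f'(\rho_1)$ as the ROF solution with Lipschitz datum $g_1=-\varphi_1/\tau$. It gets $u_1\in H^1$ from the regularity-via-duality argument of Proposition~\ref{prop:rof_H1_regularity}, which compares $P(\bar u_h)$ with $P(\bar u)$ through the duality gap; on its own this only gives the constant $2$, cf.\ Remark~\ref{rem:rof_gradient_estimate}. The sharp constants in \eqref{eq:estimate_1}--\eqref{eq:estimate_2} then come from differentiating the Euler--Lagrange equation and applying Lemma~\ref{lem:inequality_rho_div_z}. This is done once inside the Proposition and once more for $\Div z_1$, and it is only legitimate after the regularity has been established.

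You instead prove the finite-difference form of Lemma~\ref{lem:inequality_rho_div_z}: monotonicity of $\partial_{L^2}J$ tested on $p$ and its translate $p_h$. This needs no a priori regularity, so $H^1$ membership and the sharp bounds come out of a single inequality. They even come in the stronger combined form $\Vert\nabla\Div z_1\Vert_{L^2}^2+\epsilon^2\Vert\nabla f'(\rho_1)\Vert_{L^2}^2\le\tau^{-2}\Vert\nabla\varphi_1\Vert_{L^2}^2$. Since your inequality holds for every $h$, it also shows that any $L^2$ modulus of continuity of $\varphi_1$ is inherited by $\Div z_1$ and $\epsilon f'(\rho_1)$. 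In essence it is the nonexpansiveness of the resolvent $(I+\partial_{L^2}J)^{-1}$ combined with translation invariance. The paper's route is less economical here, but it is modular: the ROF proposition is a stand-alone statement, and Lemma~\ref{lem:inequality_rho_div_z} is needed anyway in Sections~\ref{sec:minimum_principle} and~\ref{sec:existence_TV_flow}.

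\textbf{Two points to correct.}
\begin{enumerate}
\item Summing $\int pZ\le\int pZ_h$ and $\int p_hZ_h\le\int p_hZ$ gives $\int\delta_hp\,\delta_hZ\le 0$, not $\ge 0$. The sign $\le$ is exactly what your display uses, since it needs $-2\epsilon\int\delta_hZ\,\delta_hp\ge0$. With $\ge0$ as you wrote it, the display would fail.
\item To get the constants exactly $1/\epsilon$ and $1$, do not replace $\Vert\delta_hw\Vert_{L^2}$ by $|h|\,\Vert\nabla w\Vert_{L^2}$ before letting $h\to0$. Instead take $h=te_i$ and divide $\Vert\delta_hZ\Vert^2+\epsilon^2\Vert\delta_hp\Vert^2\le\Vert\delta_hw\Vert^2$ by $t^2$. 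Let $t\to0$, using $\delta_{te_i}u/t\to\partial_iu$ in $L^2$ for $u\in H^1$, to get $\Vert\partial_iZ\Vert^2+\epsilon^2\Vert\partial_ip\Vert^2\le\Vert\partial_iw\Vert^2$. Then sum over $i$. Bounding each direction by the full $\Vert\nabla w\Vert^2$ and then summing would cost a factor $d$.
\end{enumerate}
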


Before proving Theorem \ref{thm:one_step_regularity}, we first introduce the ROF problem, as introduced by Rudin, Osher and Fatemi in \cite{rudin1992nonlinear},
\begin{equation}
    \label{eq:rof}
    \bar{u} = \textrm{argmin}_{u \in BV (\T)}  \left\{ J(u) + \frac{1}{2} \int_\T (u - g)^2 \right\}, \quad \text{where}\; g \in L^2  \; \text{is given}.
\end{equation}
The optimality conditions for this convex optimization problem are
$$-\Div \bar{z} + \bar{u}=g \quad \mbox{ for }(\bar{u}, \bar{z})\in \mathcal A$$
(in the sense that $\bar{u}$ is optimal if and only if there exists $\bar{z}$ such that $(\bar{u}, \bar{z})\in \mathcal A$ and the above equality is satisfied).

We have the following regularity result for the ROF problem.

\begin{proposition}
    \label{prop:rof_H1_regularity}
    Given \(g \in H^1(\T)\), the solution of \eqref{eq:rof} belongs to \(H^1(\T)\). Futhermore, we have the estimate:
    \[
        \Vert \nabla \bar{u} \Vert_{L^2} \leq \Vert \nabla g \Vert_{L^2}.
    \]
\end{proposition}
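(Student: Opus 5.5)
The plan is the standard translation-invariance argument for ROF on the torus. For \(h \in \R^d\) write \(g_h(x) = g(x+h)\) and \(\bar u_h(x) = \bar u(x+h)\).

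First, both \(J\) and the \(L^2\) fidelity term are invariant under translation on \(\T\). So \(\bar u_h\) is the unique minimizer of \eqref{eq:rof} with datum \(g_h\); uniqueness follows from strict convexity of the quadratic term.

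Second, we use the fact that the ROF solution map \(g \mapsto \bar u\) is the proximal operator of the convex, lower semicontinuous functional \(J\) on \(L^2(\T)\). It is therefore a contraction in \(L^2\). To see this directly, take the optimality conditions \(g - \bar u \in \partial_{L^2} J(\bar u)\) and \(g_h - \bar u_h \in \partial_{L^2} J(\bar u_h)\) and use monotonicity of the subdifferential:
\[
\int_\T \big((g-\bar u)-(g_h-\bar u_h)\big)(\bar u - \bar u_h) \ge 0 .
\]
This gives \(\|\bar u - \bar u_h\|_{L^2}^2 \le \int_\T (g-g_h)(\bar u-\bar u_h)\), and Cauchy--Schwarz then yields
\[
\|\bar u - \bar u_h\|_{L^2} \le \|g - g_h\|_{L^2}.
\]
The subdifferential characterization of \(\partial_{L^2}J\) recalled in the preliminaries provides exactly these inclusions, via the pairs \((\bar u,\bar z)\in\mathcal A\) and their translates.

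Third, since \(g \in H^1(\T)\), we have \(\|g - g_h\|_{L^2} \le |h|\,\|\nabla g\|_{L^2}\). Hence
\[
\sup_{h \neq 0} \frac{\|\bar u_h - \bar u\|_{L^2}}{|h|} \le \|\nabla g\|_{L^2}.
\]
The standard difference-quotient characterization of \(H^1\) then shows \(\bar u \in H^1(\T)\). More precisely, the difference quotients \((\bar u_{se_i} - \bar u)/s\) are bounded in \(L^2\), so a subsequence converges weakly to \(\partial_i \bar u\).

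It remains to get the sharp constant \(\|\nabla \bar u\|_{L^2} \le \|\nabla g\|_{L^2}\), not merely a component-wise bound. We use a single direction \(h = s\nu\) with \(\nu\) a unit vector: this gives \(\|\nu\cdot\nabla \bar u\|_{L^2} \le \|\nu \cdot \nabla g\|_{L^2}\) via lower semicontinuity under weak convergence, together with \((g_{s\nu}-g)/s \to \nu\cdot\nabla g\) strongly in \(L^2\). Applying this to \(\nu = e_i\) and summing over \(i = 1,\dots,d\) yields
\[
\|\nabla \bar u\|_{L^2}^2 = \sum_i \|\partial_i \bar u\|_{L^2}^2 \le \sum_i \|\partial_i g\|_{L^2}^2 = \|\nabla g\|_{L^2}^2 .
\]

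The main obstacle is only the careful justification of the \(L^2\) contraction through the subdifferential. This is routine once the convex-analysis setting (\(J\) convex and lsc on \(L^2\)) is fixed.
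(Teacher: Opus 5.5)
Your argument is correct, but it reaches the result by a different route than the paper.

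The paper proves $\bar u\in H^1(\T)$ by regularity via duality. From $P(\bar u)+Q(-\Div\bar z)=0$ it gets $\frac12\|\bar u_h-\bar u\|_{L^2}^2\le j(h)-j(0)$, with $j(h)=P(\bar u_h)=c-\int_\T g\,\bar u_h$. It then bounds $j(h)-j(0)$ by a first-order expansion, using $\nabla j(0)=0$ and $|\nabla j(h)|\le\|\nabla g\|_{L^2}\|\bar u_h-\bar u\|_{L^2}$. This only gives $\|\bar u_h-\bar u\|_{L^2}\le 2|h|\,\|\nabla g\|_{L^2}$ (see Remark~\ref{rem:rof_gradient_estimate}). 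The sharp constant is therefore recovered in a second step: differentiate $-\Div\bar z+\bar u=g$, test against $\nabla\bar u$, and invoke Lemma~\ref{lem:inequality_rho_div_z}.

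Your monotonicity inequality $\|\bar u-\bar u_h\|_{L^2}^2\le\int_\T(g-g_h)(\bar u-\bar u_h)$ amounts to a symmetrized version of the paper's inequality. Adding the paper's inequality at $h$ and at $-h$ gives $j(h)+j(-h)-2j(0)\ge\|\bar u_h-\bar u\|_{L^2}^2$, and $j(h)+j(-h)-2j(0)=\int_\T(g-g_h)(\bar u-\bar u_h)$. Since the right-hand side already involves a difference of translates of $g$, no expansion of $j$ is needed. The constant $1$ appears at once, even directionwise ($\|\partial_\nu\bar u\|_{L^2}\le\|\partial_\nu g\|_{L^2}$), and Lemma~\ref{lem:inequality_rho_div_z} is not used.

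Your route also gives a bit more. First, $\bar u$ inherits the whole $L^2$ translation modulus of $g$. Second, because $-\Div\bar z=g-\bar u$, expanding the square yields $\|\bar u_h-\bar u\|_{L^2}^2+\|(\Div\bar z)_h-\Div\bar z\|_{L^2}^2\le\|g_h-g\|_{L^2}^2$. From this, estimate~\eqref{eq:estimate_2} of Theorem~\ref{thm:one_step_regularity} follows by the same difference-quotient passage.

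What the paper's route buys is generality and economy within the paper. Regularity via duality does not depend on the problem being a proximal map for a quadratic fidelity term, so it adapts to other variational problems. Moreover, Lemma~\ref{lem:inequality_rho_div_z} is needed anyway for the minimum principle and the BV decay, so reusing it here costs nothing.
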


\begin{proof}
    We will prove this proposition using regularity-via-duality as in \cite{santambrogio2018regularity}.

    First, let us introduce the functionals \(P\) and \(Q\) as follows:
    \begin{align*}
        P(u) &= J(u) + \frac{1}{2} \int_\T u^2  - \int_\T g u \; ,\\
        Q(v) &= \frac{1}{2} \int_\T (g -v )^2.
    \end{align*}
    Thanks to the equality
    \[
        \frac{1}{2} a^2 + \frac{1}{2} b^2 =  ab  + \frac{1}{2} \vert a- b \vert^2,
    \]
    we have
    \[
        P(u) + Q(v) = J(u) + \int_\T - uv + \frac{1}{2} \int_\T \vert u + v - g\vert^2.
    \]
    Hence, using the conditions on $\bar{u}$ and on the corresponding $\bar{z}$, we have
    \[
        P(\bar{u}) + Q(-\Div \bar{z}) = 0.
    \]
    Setting \(\bar{u}_h(x) = \bar{u}(x+h)\), we have
    \begin{align*}
        P(\bar{u}_h) - P(\bar{u}) &= P(\bar{u}_h) + Q(-\Div \bar{z}) \\
        &= J(\bar{u}_h) + \int_\T  \bar{u}_h \Div \bar{z} + \frac{1}{2} \int_\T \vert \bar{u}_h - \bar{u} \vert^2 \\
        &\geq \frac{1}{2} \int_\T \vert \bar{u}_h - \bar{u}\vert^2. 
    \end{align*}
    We then note that, because of translation invariance, the difference $ P(\bar{u}_h) - P(\bar{u})$ equals $\int_\T g (\bar{u} - \bar{u}_h) $. The function $j(h):=P(\bar{u}_h)$ is minimal at $h=0$ and can be written as $j(h)=c-\int_\T g\bar{u}_h=c-\int_\T g(x-h)\bar{u}(x)dx$. We can then compute $\nabla j(h)=\int_\T \nabla g(x-h)\bar{u}(x)dx=\int_\T \nabla g \; \bar{u}_h$ and, by minimality, we have $\nabla j(0)=0$. This in turns implies 
    $$\nabla j(h)=\int_\T (\nabla g)(\bar{u}_h-\bar{u}),\quad\mbox{ and }|\nabla j(h)|\leq ||\nabla g||_{L^2}||\bar{u}_h-\bar{u}||_{L^2}.$$

    We then consider $\omega(r):\sup_{\eta\,:\,|\eta|\le r} \Vert \bar{u}_\eta - \bar{u} \Vert_{L^2}, $ and take an arbitrary $h$ such that $|h|\le r$. We have
    \[
        \frac{1}{2} \Vert \bar{u}_h - \bar{u} \Vert_{L^2}^2 \leq j(h) - j(0) \leq \vert h \vert \Vert \nabla g \Vert_{L^2} \sup_{\eta \in [0, h]} \Vert \bar{u}_\eta - \bar{u} \Vert_{L^2}\le \vert h \vert \Vert \nabla g \Vert_{L^2} \omega(r).
    \]
    This implies
    \[  
       \frac12 \omega(r)^2 \leq C r\omega(r),
    \]
    and hence $\omega(r) \leq 2C r$.
    This estimate exactly means \( \bar{u} \in H^1\). 

    We now want to obtain a more precise estimate on the $L^2$ norm of \(\nabla \bar{u}\). We use again the Euler Lagrange equation of \eqref{eq:rof}, i.e.\ \(-\Div \bar{z} + \bar{u} = g\). Since all the terms are $H^1$ (the term $\Div \bar{z}$ is $H^1$ since the two other terms are $H^1$), we differentiate the equation, then multiply by \(\nabla \bar{u}\) both sides and integrate over \(\T\), 
    \[
    - \int_\T \nabla \bar{u} \cdot \nabla \Div \bar{z} + \vert \nabla \bar{u} \vert^2 = \int_\T \nabla  \bar{u} \cdot \nabla g,
    \]
    using Lemma \ref{lem:inequality_rho_div_z}, we have \(- \int_\T \nabla \bar{u} \cdot \nabla \Div \bar{z} \geq 0\), which gives
    \[
        \int_\T \vert \nabla \bar{u} \vert^2 \leq \int_\T \nabla \bar{u} \cdot \nabla g,
    \]
    which implies
    \[
        \Vert \nabla \bar{u} \Vert_{L^2} \leq \Vert \nabla g \Vert_{L^2}. \quad \qedhere
    \]
\end{proof}

\begin{remark}
    \label{rem:rof_gradient_estimate}
    We could have used the inequalities that we used to obtain  \(\bar{u} \in H^1\) in order to estimate \(\Vert \nabla \bar{u} \Vert_{L^2}\) but this would have provided $||\nabla \bar{u}||_{L^2}\leq 2||\nabla g||_{L^2}$, which is less precise.
\end{remark}

\begin{remark}
    In the (ROF) problem, besides the \(H^1\) estimate which we showed here, one can also prove that any modulus of continuity of $g$ is actually inherited by $\bar{u}$ (see \cite{chambolle2010introduction} for a proof).
\end{remark}

\begin{proof}[Proof of Theorem \ref{thm:one_step_regularity}]
    We set \(u_1 := \epsilon f'(\rho_1)\). Since \(\rho_1\in BV\)  and $\rho_1$ stays away from \(c\), it follows that \(u_1 \in BV\). 

    By Lemma \ref{lem:global_contrast_change}, we can rewrite the Euler Lagrange equation \eqref{eq:one_step_characterization} as \((u_1, z_1) \in \mathcal{A}\) together with the equation
    \begin{equation}
        \label{eq:one_step_characterization_u}
        -\Div z_1 + u_1 = g_1,
    \end{equation}
    where \(g_1 = -\frac{\varphi_1 }{\tau}\) is Lipschitz and hence belongs to \(H^1\).

    Hence \(u_1\) is the solution of a ROF problem with datum $g_1$ and using Proposition \ref{prop:rof_H1_regularity}, we have \(u_1\) is in \(H^1\) and the estimate \eqref{eq:estimate_1} holds. We then deduce \( f'(\rho_1), \rho_1 \in H^1\) and $z_1\in H^2_{\Div}$, which allows us to use Lemma \ref{DrhoDz}. This in turn allows to obtain the estimate \eqref{eq:estimate_2}, i.e.\ we differentiate the equation \eqref{eq:one_step_characterization}, then multiply by \(\nabla \Div z_1\) both sides and integrate over \(\T\), 
    \[
        - \int_\T \vert \nabla \Div z_1 \vert^2 + \epsilon \nabla \Div z_1 \cdot \nabla f'(\rho_1)= \int_\T \nabla \Div z_1 \cdot \nabla g_1,
    \]
    using Lemma \ref{lem:inequality_rho_div_z}, we have \(\int_\T \nabla \Div z_1 \cdot \nabla f'(\rho_1) \leq 0\), which gives
    \[
        \int_\T \vert \nabla \Div z_1 \vert^2 \leq - \int_\T \nabla \Div z_1 \cdot \nabla g_1.
    \]
    This implies
    \[
        \Vert \nabla \Div z_1 \Vert_{L^2} \leq \Vert \nabla g_1 \Vert_{L^2}. \quad \qedhere
    \]
\end{proof}

\subsection{Convergence of the approximated TV-JKO scheme}
\label{sec:approx_JKO_flow}

We construct the approximated TV-JKO scheme as follows:
\begin{equation}
    \label{eq:rho_approximated_TV-JKO-scheme}
    \rho_0^\tau = \rho_0, \quad \rho_{k+1}^{\tau} \in \textrm{argmin} \left\{ \frac{1}{2 \tau} W_2^2(\rho^\tau_k, \rho) + J(\rho) + \epsilon \int_\T f(\rho) \right\} .
\end{equation}

Using optimality conditions for \(\rho^\tau_{k+1}\), there exists \(z^\tau_{k+1}\) such that 
\begin{equation}
    \label{eq:z_approximated_TV-JKO-scheme}
    \frac{\varphi^\tau_{k+1} }{\tau} - \Div z^\tau_{k+1} + \epsilon f'(\rho^\tau_{k+1})= 0 \quad \text{a.e.\ in } \T,
\end{equation}
where \(\varphi^\tau_{k+1}\) is the Kantorovich potential between \(\rho^\tau_{k+1}\) and \(\rho^\tau_k\) and \((\rho^\tau_{k+1}, z^\tau_{k+1}) \in \mathcal{A}\). 

Since \(\rho_0 \in L^\infty\), we have a uniform \(L^\infty\) bound for all \(\rho^\tau_k\) thanks to the one step maximum principle \ref{sec:maximum_principle}, and from our artificial lower bound, we have 
\begin{equation}
    \label{eq:rho_upper_lower_bound}
    M := \Vert \rho_0 \Vert_{L^\infty} \geq \rho^\tau_k > c , \quad a.e. 
\end{equation}

Applying Theorem \ref{thm:one_step_regularity} to the approximated JKO scheme, we have 
\begin{align}
    \label{eq:JKO_step_regularity_f_prime}
    \Vert \nabla f'(\rho_k^\tau) \Vert_{L^2} &\leq \frac{1}{\epsilon} \Vert \nabla \left[ -\Div z_k^\tau + \epsilon f'(\rho_k^\tau) \right]\Vert_{L^2}, \\
    \label{eq:JKO_step_regularity_div_z}
    \Vert \nabla \Div z_k^\tau \Vert_{L^2} &\leq \Vert \nabla \left[ -\Div z_k^\tau + \epsilon f'(\rho_k^\tau) \right]\Vert_{L^2},
\end{align}
which implies
\begin{equation}
    \label{eq:JKO_step_regularity_rho}
    \Vert \nabla \rho_k^\tau \Vert_{L^2} \leq \frac{C}{\epsilon} \Vert \nabla \left[ -\Div z_k^\tau + \epsilon f'(\rho_k^\tau) \right]\Vert_{L^2},
\end{equation}
where $C=(\inf \{f''(s)\,:\, s\in (c,M]\})^{-1}$.

We extend \(\{\rho^\tau_k\}, \{z^\tau_k\}\) by piecewise constant interpolation. We have the following bound:
\begin{equation}
    \label{eq:J_uniform_boundness_epsilon}
    \begin{aligned}
        &\frac{1}{2\tau} \sum W^2_2 (\rho^\tau_k, \rho^\tau_{k+1} ) \leq F(\rho_0); \\
        &\sup_{t \in [0, T]} J(\rho^\tau(t, \cdot))\leq \sup_{t \in [0, T]} F(\rho^\tau(t, \cdot)) \leq F(\rho_0).
    \end{aligned}
\end{equation} 

Using the fact that the embedding of \(BV (\T)\) into \(L^p(\T)\) is compact for every \(p \in [1, \frac{d}{d-1})\), and the Lebesgue dominated convergence theorem as well as \eqref{eq:rho_upper_lower_bound}, up to a suitable subsequence, we may assume that 
\[
    \rho^\tau \to \rho \ \text{a.e.\ in } (0,T)\times\T \quad\text{and in } L^p((0,T)\times\T) , \; \forall p<\infty .
\]

Using the same argument as in \cite{santambrogio2015optimal}[8.3], up to a further suitable subsequence, we may assume that
\[ 
    \sup_{t \in [0, T]} W_2 (\rho^\tau(t), \rho(t)) \to  0 \; \text{as} \; \tau \to 0
\]
for some limit curve \(\rho \in C^{0, \frac12} ((0, T); (\Pc(\T), W_2)) \cap L^\infty((0, T); BV(\T) \cap L^\infty(\T))\) and \(\rho \geq c\).

Thanks to the lower bound $\rho^\tau\geq c$, we have
\begin{equation}
    \label{eq:zf_uniform_boundness_epsilon}
    \int_0^T \Vert \nabla (\Div z^\tau - \epsilon f'(\rho^\tau)) \Vert^2_{L^2(\T)} \leq C F(\rho_0) .
\end{equation}
This inequality is classical in gradient flow theory, but the \(L^2\) norm on the left-hand side is usually weighted with \(\rho^\tau\), which we can avoid here because of \(\rho^\tau > c\).

Using \eqref{eq:JKO_step_regularity_div_z} and \eqref{eq:zf_uniform_boundness_epsilon}, since \(\Div z^\tau\) has zero mean, we may assume that (up to further subsequences) that there is some \(z \in L^\infty((0, T)\times \T) \cap L^2( (0, T); H^2_{\Div} (\T))\) such that \(z^\tau\) converges to \(z\) weakly-\(\ast\) in \(L^\infty((0, T)\times \T)\) and \((\Div z^\tau, \nabla \Div z^\tau)\) converges to \((\Div z, \nabla \Div z)\) weakly in \(L^2((0, T)\times \T)\). The compatibility of \((\rho(t, \cdot) \) and \(z(t, \cdot))\) can be shown using the same argument as in \cite{carlier2019total} or using Lemma \ref{compazrho}

Using \eqref{eq:JKO_step_regularity_rho} and \eqref{eq:zf_uniform_boundness_epsilon}, with respect to \(\tau\), we obtain a uniform  $L^2$ bound on $\nabla \rho^\tau$, and since the $L^1$ norm of $\rho^\tau$ is fixed, we also deduce a uniform  $L^2((0, T); H^1 (\T))$ bound on $\rho^\tau$. We also want a uniform $H^1$ bound on $f'(\rho^\tau)$ and, again, its gradient is bounded in $L^2$. In order to bound its $L^2$ norm we use the following facts:
\begin{enumerate}
    \item From $F(\rho^\tau(t))\leq F(\rho_0)$, we deduce a uniform bound $\int_\T \frac{1}{\rho^\tau(t)-c}\leq C_0$. 
    \item From a simple Markov inequality we deduce that the set $\{x\in\T\,:\,\rho^\tau(t)\leq c+\frac{1}{2C_0}\} $ has measure at most $1/2$.
    \item We then deduce that the functions $v^\tau:=(f'(\rho^\tau(t))-f'(c+\frac{1}{2C_0}))_-$ vanish on a set with measure at least $1/2$, for every $t$. 
    \item From a suitable Poincaré inequality, we deduce the estimate on \( ||v^\tau(t)||_{L^2} \), i.e.\ $||v^\tau(t)||_{L^2}\leq C||\nabla v^\tau(t)||_{L^2}\leq C||\nabla (f'(\rho^\tau(t)))||_{L^2}$.
    \item Using $f'\leq 0$ we deduce that $f'(\rho^\tau)-v^\tau$ is bounded by $|f'(c+\frac{1}{2C_0})|$ and this allows to obtain a bound on $||f'(\rho^\tau)||_{L^2((0,T) \times \T)}$.
\end{enumerate}
Hence, up to extracting a further subsequence, we may assume that \(f'(\rho^\tau)\) converges to \(f'(\rho)\) weakly in \(L^2((0, T); H^1(\T))\). 

The limiting equation \eqref{eq:approximated_TV_gradient_flow} can be derived using standard computations in Wasserstein gradient flows, as it is done, for instance, in \cite{carlier2019total}[Theorem 5.2].

Before stating the main result of this section, we prove the Lemma we needed for the compatibility of $\rho$ and $z$.

\begin{lemma}\label{compazrho}
    Suppose that a sequence $(\rho_n,z_n)$ of functions on $[0,T]\times\T$ satisfies 
    \begin{enumerate}
        \item $(\rho_n(t), z_n(t)) \in \mathcal{A}$ for a.e.\ $t\in [0,T]$,
        \item $\rho_n\to \rho$ in $L^2([0,T]\times\T)$ and $z_n\to z$ weakly-\(\ast\) in $L^\infty([0,T]\times\T)$,
        \item $z_n$ is bounded in $L^2([0,T];H^1_{\Div}(\T))$,
    \end{enumerate}
    then we also have $(\rho(t), z(t)) \in \mathcal{A}$ for a.e.\ $t\in [0,T]$.
\end{lemma}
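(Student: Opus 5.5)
We need to show three things for a.e.\ $t$: $\Vert z(t)\Vert_\infty\le 1$, $z(t)\in H^1_{\Div}$ with $\rho(t)\in BV\cap L^2$, and the inequality $J(\rho(t))\le -\int_\T \rho(t)\Div z(t)$. The plan is to work in integrated-in-time form, testing against arbitrary nonnegative functions of $t$. The compatibility condition then becomes an inequality between integrals over $[0,T]\times\T$, which we pass to the limit: lower semicontinuity on one side, strong--weak convergence on the other.

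\textbf{Constraints on $z$ and regularity of $\rho$.} The set $\{\Vert z\Vert_\infty\le 1\}$ is convex and weakly-$\ast$ closed in $L^\infty([0,T]\times\T)$, so $|z|\le 1$ a.e. Since $z_n$ is bounded in $L^2([0,T];H^1_{\Div})$, we may extract a subsequence with $\Div z_n\rightharpoonup w$ weakly in $L^2([0,T]\times\T)$. Testing against smooth functions identifies $w=\Div z$ in the sense of distributions, so $z\in L^2([0,T];H^1_{\Div})$ and $\Div z(t)\in L^2$ for a.e.\ $t$. For a.e.\ $t$ we also have $\rho(t)\in L^2$, and finiteness of $J(\rho(t))$ will follow from the inequality below, since $\int \rho\Div z$ is finite.

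\textbf{Key inequality.} Fix $\psi\in C_c^\infty((0,T))$ with $\psi\ge 0$. Compatibility of $(\rho_n,z_n)$ gives
\[
\int_0^T \psi(t) J(\rho_n(t))\,\de t \le -\int_0^T\!\!\int_\T \psi\,\rho_n \Div z_n .
\]
The right-hand side converges to $-\int\!\!\int \psi\rho\Div z$, because $\rho_n\to\rho$ strongly in $L^2$ and $\Div z_n\rightharpoonup \Div z$ weakly in $L^2$. For the left-hand side we use Fatou and the lower semicontinuity of $J$ under $L^1$ convergence. Along a subsequence, $\rho_n(t)\to\rho(t)$ in $L^2(\T)$ for a.e.\ $t$, since strong $L^2$ convergence in space-time yields a.e.-in-$t$ convergence of $\Vert\rho_n(t)-\rho(t)\Vert_{L^2(\T)}$ after extraction. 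Hence
\[
\int\psi J(\rho(t))\le \liminf \int \psi J(\rho_n(t)).
\]

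\textbf{Conclusion and main obstacle.} We obtain $\int \psi(t)\big[J(\rho(t))+\int_\T\rho(t)\Div z(t)\big]\de t\le 0$ for every nonnegative $\psi$. Since the bracket is an $L^1_{loc}$ function of $t$ (being bounded above by an $L^1$ function), it is $\le 0$ for a.e.\ $t$, which is the required compatibility. The main obstacle is exactly this passage to the limit in the product $\rho_n\Div z_n$: it needs strong convergence of one factor, which hypothesis (2) provides. It also needs the weak limit of $\Div z_n$ to be identified with $\Div z$, which relies on hypothesis (3). Without hypothesis (3), only a weak-$\ast$ limit of measures would be available and the pairing with $\rho$ would not be justified. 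We also have to take care that the subsequence extractions are compatible, and that measurability in $t$ of $J(\rho(t))$ holds; the latter follows because $J$ is a supremum of countably many continuous linear functionals.
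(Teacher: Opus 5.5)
Your argument is correct and is essentially the paper's proof. Both use Fatou plus lower semicontinuity of $J$ along a subsequence with $\rho_n(t)\to\rho(t)$ in $L^2(\T)$ for a.e.\ $t$, and strong--weak convergence to pass to the limit in $\int\!\!\int\rho_n\Div z_n$. Testing with $\psi(t)\ge 0$ is a small refinement that gives the a.e.\ inequality directly; the paper integrates over all of $[0,T]$ and tacitly uses the reverse inequality $J(\rho(t))\ge-\int_\T\rho(t)\Div z(t)$, valid since $|z|\le 1$.

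One slip needs fixing. The bracket is bounded \emph{below}, not above, by the $L^1$ function $-\Vert\rho(t)\Vert_{L^2}\Vert\Div z(t)\Vert_{L^2}$. Its local integrability follows from this lower bound together with the finiteness of $\int\psi J(\rho(t))\,\de t$, which your limit inequality provides.
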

\begin{proof}
   We use the Fatou lemma, the fact that, up to a subsequence, for a.e.\ $t$ we have $\rho_n(t)\to \rho(t)$ in $L^2(\T)$, and the lower semi-continuity of \(J\), in order to obtain:
\begin{equation}
    \label{eq:J_compatibility}
    \begin{aligned}
        \int_0^T J(\rho_t) &\leq \int_0^T \liminf_{\epsilon \to 0} J(\rho_\epsilon(t)) \\
        &\leq \liminf_{\epsilon \to 0} \int_0^T J(\rho_\epsilon(t)) \\
        &\leq \liminf_{\epsilon \to 0} -\int_0^T\int_\T \rho_\epsilon\Div z_\epsilon \\
        &=- \int_0^T\int_\T \rho \Div z.
    \end{aligned}
\end{equation}
The last equality is due to the strong convergence of \(\rho_\epsilon\) and the weak convergence of \(\Div z_\epsilon\) in \(L^2((0, T) \times \T)\), which finishes the proof. 
\end{proof}

We then deduce the following existence theorem.

\begin{theorem}
    \label{thm:approximated_JKO_to_regularity}
    Given an initial density \(\rho_0 \in \mathcal{P}  \cap L^\infty \cap BV\) such that $\mathrm{ess}\inf \rho_0>c$, there exists a vanishing sequence of time steps \(\tau_n \to 0\) such that the limit of \(\rho^\tau\), denoted as \(\rho\), belongs to the class \(L^2((0, T); H^1(\T))\), and the limit of \(z^\tau\), denoted as \(z\),  belongs to the class \(L^2((0, T); H^2_{\Div}(\T))\), and the pair \((\rho, z)\) is a weak solution of the approximated TV gradient flow \eqref{eq:approximated_TV_gradient_flow}.
\end{theorem}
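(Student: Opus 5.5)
The plan is to collect the compactness and regularity estimates established above for the piecewise constant interpolations $(\rho^\tau, z^\tau)$, extract a single diagonal subsequence $\tau_n\to 0$ along which every convergence holds, and then pass to the limit in the discrete Euler--Lagrange equation \eqref{eq:z_approximated_TV-JKO-scheme} combined with the discrete continuity equation.

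\emph{Compactness and regularity of the limit.} By \eqref{eq:JKO_step_regularity_rho} and \eqref{eq:zf_uniform_boundness_epsilon}, $\rho^\tau$ is bounded in $L^2((0,T);H^1(\T))$. By \eqref{eq:JKO_step_regularity_div_z} and the zero mean of $\Div z^\tau$, $z^\tau$ is bounded in $L^\infty\cap L^2((0,T);H^2_{\Div})$. By items (1)--(5) above, $f'(\rho^\tau)$ is bounded in $L^2((0,T);H^1)$. We then extract a subsequence along which:
\begin{itemize}
\item $\rho^\tau\to\rho$ a.e., in every $L^p$, and uniformly in $W_2$;
\item $\nabla\rho^\tau\rightharpoonup\nabla\rho$ weakly in $L^2$;
\item $z^\tau\rightharpoonup z$ weakly-$\ast$, with $\Div z^\tau$ and $\nabla\Div z^\tau$ converging weakly in $L^2$;
\item $f'(\rho^\tau)\rightharpoonup w$ weakly in $L^2H^1$.
\end{itemize}
Lower semicontinuity of the norms under weak convergence gives $\rho\in L^2H^1$ and $z\in L^2H^2_{\Div}$. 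The identification $w=f'(\rho)$ follows from the a.e.\ convergence of $\rho^\tau$ and the continuity of $f'$ on $(c,\infty)$. The set where $\rho=c$ is negligible: a.e.\ convergence, Fatou's lemma and the bound $\int 1/(\rho^\tau-c)\le C_0$ give $\int 1/(\rho-c)\le C_0$, so $\rho>c$ a.e. Lemma \ref{compazrho} then gives $(\rho(t),z(t))\in\mathcal{A}$ for a.e.\ $t$. Finally, $\rho\in L^\infty BV\cap L^\infty$ follows from \eqref{eq:J_uniform_boundness_epsilon}, \eqref{eq:rho_upper_lower_bound} and lower semicontinuity of $J$.

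\emph{Limit equation.} For a test function $u$, write $\int u(t_{k+1})\rho^\tau_{k+1}-\int u(t_{k+1})\rho^\tau_k$ using the optimal map $T_{k+1}=\mathrm{id}-\nabla\varphi^\tau_{k+1}$ from $\rho^\tau_{k+1}$ to $\rho^\tau_k$. A Taylor expansion gives
\[
\int \nabla u\cdot\nabla\varphi^\tau_{k+1}\,\rho^\tau_{k+1} + O\bigl(\|D^2u\|_\infty W_2^2(\rho^\tau_k,\rho^\tau_{k+1})\bigr).
\]
Replacing $\nabla\varphi^\tau_{k+1}/\tau$ by $\nabla(\Div z^\tau_{k+1}-\epsilon f'(\rho^\tau_{k+1}))$ via \eqref{eq:z_approximated_TV-JKO-scheme}, summing over $k$, and using $\sum W_2^2\le 2\tau F(\rho_0)$, the error terms vanish as $\tau\to0$. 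The remaining discrete identity involves the time derivative of $u$ plus
\[
\int_0^T\!\!\int\rho^\tau\,\nabla(\Div z^\tau-\epsilon f'(\rho^\tau))\cdot\nabla u .
\]
This last term is a product of $\rho^\tau$, which converges strongly in $L^2$ and is bounded in $L^\infty$, with a gradient converging weakly in $L^2$, so it passes to the limit. The result is the weak formulation of Definition \ref{defisol}.

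\emph{Main obstacle.} The delicate step is ensuring that all extracted convergences are compatible and identify the correct nonlinear limits, namely $w=f'(\rho)$ near the singular value $c$ and the compatibility $(\rho,z)\in\mathcal{A}$. In both cases, strong convergence of $\rho^\tau$ combined with weak convergence of the gradients is what resolves the issue. Continuity in $W_2$ at $t=0$, giving $\rho(0)=\rho_0$, follows from the uniform $C^{0,1/2}$ bound.
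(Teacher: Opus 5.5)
Your proposal is correct and follows essentially the same route as the paper. Both collect the uniform bounds \eqref{eq:JKO_step_regularity_rho}, \eqref{eq:JKO_step_regularity_div_z}, \eqref{eq:zf_uniform_boundness_epsilon} together with the Markov--Poincar\'e bound on $f'(\rho^\tau)$, extract a common subsequence, invoke Lemma~\ref{compazrho} for compatibility, and pass to the limit in the JKO Euler--Lagrange equation by the standard Taylor expansion, which the paper delegates to Carlier--Poon. Your Fatou argument giving $\rho>c$ a.e.\ and your explicit identification of the weak limit of $f'(\rho^\tau)$ are useful extra details, since the paper only records $\rho\geq c$ while Definition~\ref{defisol} requires the strict inequality.
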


\section{Existence of the TV Wasserstein gradient flow and BV decay}
\label{sec:existence_TV_flow}

We start from the case where the initial density is BV.

\begin{theorem}\label{eps0BV}
     Given an initial density \(\rho_0 \in \mathcal{P} (\T) \cap L^\infty (\T) \cap BV(\T)\), with $\rho_0\geq\alpha>c$, there exists a continuous curve of measures \(\rho\) which is a weak solution of the TV gradient flow \eqref{eq:TV-gradient-flow} in the sense of Definition \ref{defisol} and which satisfies $J(\rho_t)\leq J(\rho_0)$ and $\alpha\leq \rho\leq ||\rho_0||_{L^\infty}$.
\end{theorem}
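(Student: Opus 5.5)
We take $c<\alpha$ fixed and, for each $\epsilon>0$, let $(\rho^\epsilon,z^\epsilon)$ be the weak solution of \eqref{eq:approximated_TV_gradient_flow} given by Theorem \ref{thm:approximated_JKO_to_regularity}. It has the regularity required by Theorem \ref{thm:regularity_to_minimum_principle}, so $\rho^\epsilon\geq\alpha$. The maximum principle of Section \ref{sec:maximum_principle}, passed to the limit $\tau\to0$ (a.e.\ convergence), gives $\rho^\epsilon\leq M:=\|\rho_0\|_{L^\infty}$. Both bounds are uniform in $\epsilon$. The JKO energy inequality \eqref{eq:J_uniform_boundness_epsilon}, passed to the limit by lower semicontinuity, gives
\[
J(\rho^\epsilon_t)+\epsilon\int_\T f(\rho^\epsilon_t)\leq J(\rho_0)+\epsilon\int_\T f(\rho_0),
\]
and the same holds for \eqref{eq:zf_uniform_boundness_epsilon}. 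Since $\rho_0\geq\alpha>c$, we have $\int f(\rho_0)\le 1/(\alpha-c)$, so the right-hand sides are bounded uniformly for $\epsilon\le1$ and tend to $J(\rho_0)$ as $\epsilon\to0$.

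\textbf{Key estimate.} The crucial point is a bound on $\nabla\Div z^\epsilon$ that does not degenerate as $\epsilon\to0$. The estimate \eqref{eq:JKO_step_regularity_div_z} says $\|\nabla\Div z_k^\tau\|_{L^2}\le\|\nabla(\Div z_k^\tau-\epsilon f'(\rho_k^\tau))\|_{L^2}$. Combined with the dissipation bound $\int_0^T\|\nabla(\Div z^\tau-\epsilon f'(\rho^\tau))\|^2\le C F_\epsilon(\rho_0)$ (where $C\sim 1/c$, from the lower bound) and lower semicontinuity, it gives $\nabla\Div z^\epsilon$ bounded in $L^2((0,T)\times\T)$ uniformly in $\epsilon$. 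The same bound holds for $w^\epsilon:=\nabla(\Div z^\epsilon-\epsilon f'(\rho^\epsilon))$, and hence for $\epsilon\nabla f'(\rho^\epsilon)$. Since $\rho^\epsilon\ge\alpha>c$, $f'(\rho^\epsilon)$ is bounded in $L^\infty$, so $\epsilon f'(\rho^\epsilon)\to0$ uniformly. Thus $\epsilon\nabla f'(\rho^\epsilon)\rightharpoonup0$ weakly in $L^2$, and the limits of $\nabla\Div z^\epsilon$ and $w^\epsilon$ coincide.

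\textbf{Compactness.} We have $\sup_t J(\rho^\epsilon_t)\le C$ and $\alpha\le\rho^\epsilon\le M$. The metric derivative is bounded in $L^2$ uniformly, giving uniform $C^{0,1/2}$ continuity in $W_2$. Arguing as in Section \ref{sec:approx_JKO_flow} (compact embedding of $BV$ into $L^1$ together with Aubin–Lions/Arzelà–Ascoli type arguments in $W_2$), we extract $\rho^\epsilon\to\rho$ uniformly in $W_2$ and strongly in $L^p((0,T)\times\T)$ for every $p<\infty$. We also get $z^\epsilon\to z$ weakly-$*$ in $L^\infty$, with $\Div z^\epsilon,\nabla\Div z^\epsilon$ converging weakly in $L^2$; for $\Div z^\epsilon$ this uses the Poincaré inequality, since it has zero mean. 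Lemma \ref{compazrho} then gives $(\rho_t,z_t)\in\mathcal A$ for a.e.\ $t$, and $z\in L^2((0,T);H^2_{\Div})$.

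\textbf{Passing to the limit.} In the weak formulation, the term $\int\!\!\int\rho^\epsilon\,w^\epsilon\cdot\nabla u$ is a product of a strongly convergent factor ($\rho^\epsilon\nabla u$, which is bounded and converges in $L^2$) and a weakly convergent one ($w^\epsilon\rightharpoonup\nabla\Div z$). It therefore converges to $\int\!\!\int\rho\nabla\Div z\cdot\nabla u$, which gives \eqref{eq:TV-gradient-flow} in the sense of Definition \ref{defisol}. The bounds $\alpha\le\rho\le M$ pass to the limit a.e. The bound $J(\rho_t)\le J(\rho_0)$ follows from lower semicontinuity of $J$ under $W_2$ (hence weak) convergence, using that $J(\rho^\epsilon_t)\le J(\rho_0)+\epsilon/(\alpha-c)$. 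Continuity at $t=0$ with $\rho(0)=\rho_0$ comes from the uniform Hölder bound.

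\textbf{Main obstacle.} The delicate point is the $\epsilon$-independence of the dissipation bound on $\nabla\Div z^\epsilon$ together with the vanishing of the $\epsilon f'$ contribution. This is exactly where the minimum principle $\rho^\epsilon\ge\alpha>c$ from Theorem \ref{thm:regularity_to_minimum_principle} is indispensable: without it, $\epsilon f'(\rho^\epsilon)$ would not be controlled.
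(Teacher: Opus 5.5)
Your proposal is correct and follows the paper's proof essentially step by step. You use the minimum principle of Theorem \ref{thm:regularity_to_minimum_principle}, the $\epsilon$-uniform JKO bounds on $J$, on the $L^\infty$ norm and on $\nabla\Div z^\epsilon$, the same compactness, Lemma \ref{compazrho} for compatibility, and strong/weak passage to the limit in the weak formulation. The only minor variation is how you remove the $\epsilon$-term: you show $\epsilon\nabla f'(\rho^\epsilon)\rightharpoonup 0$ weakly in $L^2$ from its $L^2$ bound together with $\epsilon f'(\rho^\epsilon)\to 0$ uniformly, whereas the paper shows $\epsilon f''(\rho_\epsilon)\rho_\epsilon\nabla\rho_\epsilon\to 0$ strongly in $L^1$ using the BV bound; both rely on $\rho^\epsilon\geq\alpha>c$.
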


\begin{proof}
We call \(\rho_\epsilon\) the solution obtained by the approximated TV-JKO scheme with the extra energy term \(\epsilon f\) and the initial data \(\rho_0\). 

Using Theorem \ref{thm:regularity_to_minimum_principle}, we have \(\rho_\epsilon \geq \alpha\).

We now want to pass to the limit as $\epsilon\to 0$. We use the bounds that we proved, in particular \eqref{eq:rho_upper_lower_bound}, which provides a uniform $L^\infty$ bound, \eqref{eq:J_uniform_boundness_epsilon}, which provides a bound on $J$ and at the same time the equicontinuity (valued in $W_2$) of the curves $\rho_\epsilon$, and \eqref{eq:zf_uniform_boundness_epsilon}, which, together with \eqref{eq:JKO_step_regularity_div_z}, provides a uniform $H^1$ bound on $\Div z_\epsilon$.  All these bounds are independent of $\epsilon$. Using then the compact embedding \(BV \subset L^1(\T)\), we obtain, up to taking a suitable subsequence, the following properties as \(\epsilon \to 0\):
\begin{align*}
    &\sup_{t \in [0, T]} J(\rho_\epsilon(t)) \leq F(\rho_0)\leq J(\rho_0)+C\epsilon; \\
    &\rho_\epsilon \leq \Vert \rho_0 \Vert_{L^\infty(\T)} \quad \text{a.e.\ in } (0, T) \times \T; \\
    &\rho_\epsilon \to \rho \quad \text{in } L^p ((0, T)\times\T) \quad \forall \;  p \in (1, \infty); \\
    &z_\epsilon \overset{\ast}{\rightharpoonup} z \quad \text{in } L^\infty ((0, T) \times \T); \\
    &(\Div z_\epsilon, \nabla \Div z_\epsilon) \rightharpoonup (\Div z, \nabla \Div z) \quad \text{in } L^2 ((0, T) \times \T) .
\end{align*}
The strong convergence in $L^p$ deserves a small clarification: first we extract, using Ascoli-Arzelà Theorem and the $W_2$ equicontinuity (which is standard for gradient flows) a subsequence which converges uniformly in time valued into $W_2$. This means in particular that for every $t\in[0,T]$ we have $\rho_\epsilon(t)\overset{\ast}{\rightharpoonup} \rho(t)$ as measures. Then, the uniform $BV$ bound and the compact injection into $L^1$ transforms 
this convergence into a strong $L^1$ convergence. Finally, the uniform $L^\infty$ bound makes this convergence strong in every $L^p$ space for $p<+\infty$. This is even stronger than what stated above, where only a convergence in space-time is underlined.

Moreover, for any \(u \in C^\infty_c([0, T) \times \T)\), 
\[
    \int_0^T \int_\T \left( \rho_\epsilon \partial_t u + \rho_\epsilon \nabla (\Div z_\epsilon - \epsilon f'(\rho_\epsilon)) \cdot \nabla u \right) \de x \de t= - \int_\T \rho_0(x) u(0, x) \de x.
\]

Thanks to the strong convergence of \(\rho_\epsilon\) and the weak convergence of \(\nabla \Div z_\epsilon\) in \(L^2((0, T) \times \T)\), we have \(\rho_\epsilon \nabla \Div z_\epsilon \rightharpoonup \rho \nabla \Div z\) in \(L^2((0, T) \times \T)\). Finally, thanks to the uniform upper bound and BV bound of \(\rho_\epsilon\) and \(\rho_\epsilon \geq \alpha\), we have \(\epsilon\rho_\epsilon \nabla f' (\rho_\epsilon) = \epsilon f''(\rho_\epsilon) \rho_\epsilon \nabla \rho_\epsilon \to 0\) in \(L^1 ((0, T) \times \T)\). Thus, we take \(\epsilon \to 0\), and get:
\[
    \int_0^T \int_\T \left( \rho \partial_t u + \rho \nabla  \Div z \cdot \nabla u \right) \de x \de t= - \int_\T \rho_0(x) u(0, x) \de x.
\]

It remains to prove \((\rho_t, z_t) \in \mathcal{A}\) a.e.\ \(t \in (0, T)\), which is a consequence of Lemma \ref{compazrho}.
\end{proof}

We now state a more general theorem where at the same time we both remove the assumption $\rho_0\in BV$ and give a more precise estimate on the decay of the BV norm of the solution.
\begin{theorem}
    Let \(\rho_0 \in \mathcal{P} (\T) \) with \( \alpha\leq\rho_0\leq\beta \) for two strictly positive constants $\alpha,\beta$. Then there exists a weak solution \(\rho\) of the TV-Wasserstein gradient flow satifying $\alpha\leq\rho\leq\beta$ and also satisfying the total variation decay:
    \begin{equation}
        \label{eq:TV_decay}
        J(\rho(t)) \leq \min\{A t^{-1} , B t^{-\frac{1}{3}} ,J(\rho_0)\}.
    \end{equation}
    where the constants $A,B$ only depend on $\kappa:=\frac\beta\alpha$.
\end{theorem}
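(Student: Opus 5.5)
We first establish the decay estimate \eqref{eq:TV_decay} at the level of the approximated flows with BV initial data. Then we pass to the limit, first $\epsilon\to0$ and then in a BV approximation of $\rho_0$.

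\textbf{Step 1: the dissipation inequality.} Take $\rho_0\in BV$ with $\alpha\le\rho_0\le\beta$, fix $c<\alpha$, and let $(\rho_\epsilon,z_\epsilon)$ be the solution given by Theorem~\ref{thm:approximated_JKO_to_regularity}. By Theorem~\ref{thm:regularity_to_minimum_principle} and the maximum principle, $\alpha\le\rho_\epsilon\le\beta$.

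Rather than differentiating $J(\rho_\epsilon(t))$ along the PDE, where regularity is delicate, we work along the JKO scheme. The monotonicity $F_\epsilon(\rho^\tau_{k+1})+\frac{1}{2\tau}W_2^2\le F_\epsilon(\rho^\tau_k)$ is too weak, since it controls $F_\epsilon$ rather than $J$. We therefore test the optimality condition \eqref{eq:z_approximated_TV-JKO-scheme} against the displacement, in a flow-interchange spirit, using convexity of $J$:
\[
J(\rho_k)\ge J(\rho_{k+1})-\int \Div z_{k+1}\,(\rho_k-\rho_{k+1}).
\]
Writing $\rho_k-\rho_{k+1}$ through the transport map $T=\id-\nabla\varphi_{k+1}$, a first-order expansion gives
\[
\int \Div z_{k+1}(\rho_k-\rho_{k+1})\approx \int \nabla\Div z_{k+1}\cdot\nabla\varphi_{k+1}\,\rho_{k+1}.
\]
Substituting $\nabla\varphi/\tau=\nabla(\Div z-\epsilon f'(\rho))$ and using Lemma~\ref{lem:inequality_rho_div_z} (with Lemma~\ref{lem:global_contrast_change}, through $\rho\nabla f'(\rho)=\nabla g(\rho)$) to discard the $\epsilon$ cross term, we aim for a discrete version of
\[
\frac{d}{dt}J(\rho)\le -\int \rho|\nabla \Div z|^2\le -\alpha\|\nabla\Div z\|_{L^2}^2 .
\]
This step is where I expect the main obstacle. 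The expansion error has to be controlled, because $J$ is not geodesically convex. If that fails, I would instead prove the inequality directly on the limit PDE in integrated form, $J(\rho_t)-J(\rho_s)\le-\alpha\int_s^t\|\nabla\Div z\|^2$. To do so I would regularize $\rho_\epsilon$ in time, use the convexity inequality $J(\rho(t+h))-J(\rho(t))\le -\int\Div z(t+h)\,(\rho(t+h)-\rho(t))$, and insert the weak formulation. The $L^2_tH^1_x$ regularity from Theorem~\ref{thm:approximated_JKO_to_regularity} is exactly what makes these pairings meaningful.

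\textbf{Step 2: the two functional inequalities.} Set $w=\Div z$, which has zero mean, and recall $J(\rho)=-\int\rho\, w$. For the first inequality, Poincaré gives
\[
J=-\int(\rho-\bar\rho)w\le \|\rho-\bar\rho\|_{L^2}\|w\|_{L^2}\le C\beta\|\nabla w\|_{L^2},
\]
since $|\rho-\bar\rho|\le\beta$. Hence $\|\nabla w\|^2\ge J^2/(C\beta^2)$, and with $\int\rho|\nabla w|^2\ge\alpha\|\nabla w\|^2$ this yields $J'\le -C\frac{\alpha}{\beta^2}J^2$. For the second inequality, write $J=-\int \rho w\le\beta\|w\|_{L^1}$. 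Interpolating $w$ between $W^{-1,\infty}$ (where $\|w\|_{W^{-1,\infty}}\le\|z\|_\infty\le1$) and $H^1$ gives $\|w\|_{L^2}^2\le C\|\nabla w\|_{L^2}$, since by duality $\|w\|_{L^2}^2=\int w\,w\le\|z\|_{L^2}\|\nabla w\|_{L^2}$. Then $J^2\le\beta^2\|w\|_{L^2}^2\le C\beta^2\|\nabla w\|$, so $J'\le-C\alpha\beta^{-4}J^4$. The constants involve $\alpha,\beta$ separately. To reduce to dependence on $\kappa$ alone, I would note that $\bar\rho=1$ forces $\alpha\le1\le\beta$, so $\alpha,\beta$ are controlled by $\kappa$ ($\beta\le\kappa$, $\alpha\ge1/\kappa$). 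Integrating the two ODE inequalities gives $J(\rho_t)\le At^{-1}$ and $J(\rho_t)\le Bt^{-1/3}$, while $J(\rho_t)\le J(\rho_0)$ is already known. These bounds are independent of $\epsilon$, and they pass to the limit $\epsilon\to0$ by lower semicontinuity of $J$ and the strong convergence obtained in the proof of Theorem~\ref{eps0BV}.

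\textbf{Step 3: removing $\rho_0\in BV$.} Mollify $\rho_0$ to get $\rho_0^n\in BV$ with the same bounds $\alpha\le\rho_0^n\le\beta$ and $\rho_0^n\to\rho_0$ in $L^1$, and take the solutions $\rho^n$ from Theorem~\ref{eps0BV} together with Step 2. On every interval $[\delta,T]$ we have $J(\rho^n_t)\le B\delta^{-1/3}$ uniformly in $n$, and this gives the $L^2_{t}H^1$ bound on $\Div z^n$ via the dissipation inequality started at time $\delta$. The remaining issue is $W_2$ equicontinuity up to $t=0$. The energy identity gives
\[
\int_s^t|\rho'|^2\le J(\rho_s)\le Bs^{-1/3}.
\]
Summing over dyadic intervals $[2^{-j-1}t,2^{-j}t]$ and applying Cauchy--Schwarz on each gives $W_2(\rho_0^n,\rho^n_t)\le C\sum_j(2^{-j}t)^{1/2}(2^{-j}t)^{-1/6}\le Ct^{1/3}$, which is summable precisely because $1/3<1$. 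With this uniform modulus, Ascoli--Arzelà, the compactness from Theorem~\ref{eps0BV} on each $[\delta,T]$, Lemma~\ref{compazrho}, and passage to the limit in the weak formulation (the initial term converges since $\rho_0^n\to\rho_0$) produce the desired solution. The bounds $\alpha\le\rho\le\beta$ and \eqref{eq:TV_decay} survive by lower semicontinuity.
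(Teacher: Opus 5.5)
Your proposal is correct. Its skeleton is the paper's: dissipation of $J$ along the $\epsilon$-flow, two functional inequalities giving $J'\le -cJ^2$ and $J'\le -cJ^4$, ODE comparison, $\epsilon\to0$, mollified initial data, and dyadic equicontinuity at $t=0$. The differences are in the details.

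\textbf{Step 1.} Drop the JKO-level route. The expansion error cannot be controlled, since you only know $\Div z_{k+1}\in H^1$. The heuristic also has a sign slip: since $\rho_k=(\id-\nabla\varphi_{k+1})_\#\rho_{k+1}$, one gets $\int\Div z_{k+1}(\rho_k-\rho_{k+1})\approx-\int\rho_{k+1}\nabla\Div z_{k+1}\cdot\nabla\varphi_{k+1}$. Your fallback is exactly the paper's route. The paper simply asserts the chain rule from $\partial_t\rho_\epsilon\in L^2H^{-1}$ and $-\Div z_\epsilon(t)\in\partial J(\rho_\epsilon(t))$ with $\Div z_\epsilon\in L^2H^1$. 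Your one-sided subgradient inequality, summed over a partition, is a clean justification of this, and only that direction is needed. The $\epsilon$ cross term is then discarded via Lemmas~\ref{lem:global_contrast_change} and~\ref{lem:inequality_rho_div_z}, as in the paper.

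\textbf{Step 2.} The paper combines the weighted Jensen bound $\int\rho|\Div z|^q\ge J^q$ with Poincar\'e and the $L^4$ Gagliardo--Nirenberg inequality $\int|\Div z|^4\le 9\int|\nabla\Div z|^2$. This gives constants in $\kappa$ directly, e.g.\ $A=\kappa/4\pi^2$. You instead use the unweighted bound $|\rho-1|\le\beta$ and the $L^2$ interpolation $\|\Div z\|_{L^2}^2\le\|\nabla\Div z\|_{L^2}$, which give factors $\alpha/\beta^2$ and $\alpha/\beta^4$. You then correctly need the unit-mass normalization $\alpha\le 1\le\beta$ to bound these below by $\kappa^{-3}$ and $\kappa^{-5}$. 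The rates in $t$ are the same; only the powers of $\kappa$ are worse.

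\textbf{The limit $\epsilon\to0$.} You integrate the ODE inequality at fixed $\epsilon$ and pass the resulting pointwise bound to the limit by lower semicontinuity, using $\rho_\epsilon(t)\to\rho(t)$ for every $t$. The paper instead passes the integrated dissipation inequality itself to the limit. That forces its time-averaging and Lebesgue-point argument, because $J(\rho_\epsilon(s))$ sits on the side where lower semicontinuity does not help. Your order is shorter; the paper's yields the dissipation inequality for the limit flow.

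\textbf{Step 3.} Because of your ordering, you never establish a dissipation inequality for $\rho^n$ itself. So $\int_\delta^T\|\nabla\Div z^n\|_{L^2}^2\le \alpha^{-1}B\delta^{-1/3}$ and $\int_\delta^T|(\rho^n)'|^2\le B\delta^{-1/3}$ must come from the $\epsilon$-level estimates by weak lower semicontinuity. For the second bound, use $|(\rho^n)'|^2\le\int\rho^n|\nabla\Div z^n|^2$. The same dyadic estimate also makes the flux $\rho^n\nabla\Div z^n$ uniformly integrable near $t=0$. This is what lets the weak formulation pass to the limit with the initial term. Your dyadic summation is precisely Lemma~\ref{equicgamma} with $q=2$, $\gamma=1/3$.
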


\begin{proof}
    First, we assume that the initial data \(\rho_0\) is in \(BV(\T)\) and prove \eqref{eq:TV_decay}.
    
    Consider the solution of approximated TV-Wasserstein gradient flow \(\rho_\epsilon\) with the energy \(F_\epsilon\). We have the following regularity for \(\rho_\epsilon\) and \(z_\epsilon\):
    \begin{align*}
        &\rho_\epsilon \geq \alpha; \\
        &\rho_\epsilon \in L^2((0, T); H^1(\T)); \\
        &z_\epsilon \in L^2((0, T); H_{\Div}^2(\T)); \\
        & \Vert \nabla \Div z_\epsilon \Vert_{L^2} \leq \Vert \nabla \left( -\Div z_\epsilon + \epsilon f'(\rho_\epsilon) \right)\Vert_{L^2}.
    \end{align*}

   We now consider the dissipation, along the approximated TV-Wasserstein gradient flow, of the TV energy itself, and we obtain for any \(t \geq s \geq 0 \),
    \begin{align*}
     J(\rho_\epsilon(s)) - J(\rho_\epsilon(t))  &= 
     \int_s^t \int_\T \langle \partial_t \rho_\epsilon, -\Div z_\epsilon \rangle \\
     &=\int_s^t \int_\T \rho_\epsilon \nabla \left( -\Div z_\epsilon + \epsilon f'(\rho_\epsilon) \right) \cdot \nabla ( -\Div z_\epsilon)\\
     &=\int_s^t\int_\T\rho_\varepsilon   \vert \nabla \left( -\Div z_\epsilon  \right) \vert^2-\int_s^t\int_\T\varepsilon\rho_\varepsilon\nabla (  f'(\rho_\epsilon))\cdot  \nabla (\Div z_\epsilon )\\ 
        &\geq \int_s^t \int_\T  \rho_\varepsilon\vert \nabla \Div z_\epsilon \vert^2\\
         &\geq \alpha\int_s^t \int_\T  \vert \nabla \Div z_\epsilon \vert^2.
    \end{align*}
    The first equality above is justified by the fact that $\partial_t \rho_\varepsilon$ belongs to $L^2([0,T];H^{-1}(\T))$ and $-\Div z_\epsilon(t)$ is, for a.e.\ $t$, an element of $\partial_{L^2}J(\rho_\varepsilon(t))$ and as a function of time, it belongs to $L^2([0,T];H^{1}(\T))$, which makes the computation of the derivative of $J$ along the curve rigorous; the next inequality is justified by Lemma \ref{lem:inequality_rho_div_z} and the last one by $\rho_\varepsilon\geq\alpha$.

    We now use  the Poincar\'e inequality on $\Div z_\epsilon$, which has zero-mean. The characterization of the optimal constant on the torus provides, for every zero-mean function, the bound $4\pi^2\int_\T v^2\leq\int_\T |\nabla v|^2$, so that we have 
    \begin{align*}
      J(\rho_\epsilon(s))  - J(\rho_\epsilon(t))  &\geq 4\pi^2\alpha \int_s^t \int_\T  \vert \Div z_\epsilon \vert^2 \\
        &\geq 4\pi^2\frac{\alpha}{\beta}\int_s^t \int_\T\rho_\epsilon\vert \Div z_\epsilon \vert^2.
    \end{align*}
    We now use the following computation based on the fact that $\rho_\epsilon$ has unit mass:
    \begin{equation}\label{qor2}
        \int_\T\rho_\epsilon\vert \Div z_\epsilon \vert^q=\int_\T\rho_\epsilon\vert \Div z_\epsilon \vert^q \left(\int_\T\rho_\epsilon\right)^{q-1}\geq \left(\int_\T-\rho_\epsilon\Div z_\epsilon\right)^q=J(\rho_\epsilon)^q.
        \end{equation}
    Using the above inequality for $q=2$ we obtain
    \[
   J(\rho_\epsilon(s)) - J(\rho_\epsilon(t)) \geq \frac{4\pi^2}{\kappa}\int_s^tJ(\rho_\epsilon)^2.
    \]
    If, instead of using the Poincar\'e inequality, we use a Gagliardo-Nirenberg interpolation type inequality 
    \[
        9\Vert z_\epsilon \Vert^2_{L^\infty} \int \vert \nabla \Div z_\epsilon \vert^2 \geq \int \vert \Div z_\epsilon \vert^4,
    \] 
    which is proven in Appendix \ref{sec:appendix_inequality}, combined with \eqref{qor2} for $q=4$, we similarly obtain
$$
       J(\rho_\epsilon(s)) -J(\rho_\epsilon(t))  \geq \frac{\alpha}{9\beta}\int_s^t \int_\T \rho_\epsilon \vert \Div z_\epsilon \vert^4 \geq \frac{1}{9\kappa}\int_s^t J^4(\rho_\epsilon).
$$

    We now want to pass to the limit in these inequalities when $\epsilon\to 0$. The lower semicontinuity is enough to treat the right-hand side and as well as $J(\rho_\epsilon(t))$, but we need more in order to deal with $J(\rho_\epsilon(s))$.

    We therefore fix $s_0<s_1<t$ and we average the above inequalities for $s\in (s_0,s_1)$. If we denote by $\eta$ the function which takes value $1$ on $(s_1,t)$ and $\eta(r)=\frac{(r-s_0)}{s_1-s_0}$ for $r\in (s_0,s_1)$, we obtain
    \[
        \frac{1}{s_1-s_0}\int_{s_0}^{s_1}J(\rho_\epsilon(s))ds +C\epsilon\geq J(\rho_\epsilon(t))+c\int_{s_0}^t \eta(r)J^q(\rho_\epsilon(r))dr,
    \]
    where $c$ can take the value $4\pi^2\frac\alpha\beta$ or $\frac{\alpha}{9\beta}$ and $q$ the value $2 $ or $4$ in the two cases of interest.

    We then pass to the limit $\epsilon\to 0$ and use
    \[
        \int_{s_0}^{s_1}\!\!J(\rho_\epsilon(s))ds=\!\int_{s_0}^{s_1}\!\!\int_\T -\rho_\epsilon(s)\Div(z_\epsilon(s))\to \int_{s_0}^{s_1}\!\!\int_\T -\rho(s)\Div(z(s))=\!\int_{s_0}^{s_1}\!\!J(\rho(s))ds,
    \]
    where the convergence is justified by the strong $L^2$ convergence for $\rho_\epsilon$ and the weak $L^2$ convergence for $\Div z_\epsilon$ (these convergences only hold in space-time, which explains why we need to take the average in time). We then obtain, also using lower-semicontinuity,
    \[
        \frac{1}{s_1-s_0}\int_{s_0}^{s_1}J(\rho(s))ds \geq J(\rho(t))+c\int_{s_0}^t \eta(r)J^q(\rho(r))dr.
    \]
    Then, considering a point $s_0$ which is a Lebesgue point for $s\mapsto J(\rho(s))$ and taking the limit $s_1\to s_0$, we finally obtain
    \[
        J(\rho(s_0)) \geq J(\rho(t))+c\int_{s_0}^t J^q(\rho(r))dr.
    \]
    This proves that the function $s\mapsto J(\rho(s))$ is Lebesgue-equivalent to a nonincreasing function $\phi$ satisfying the inequality $\phi'\leq -c\phi^q$ a.e.\
    If we consider the function $\psi:=\phi^{1-q}$, such a function is nondecreasing (since in both cases $q>1$) and it satisfies $\psi'=(1-q)\phi^{-q}\phi'\geq c(q-1)$ a.e.\ Since monotone increasing functions grow more than the primitive of their a.e.\ derivative, and using the non-negativity of $\psi$, we obtain $\psi(t)\geq c(q-1)t$, i.e.\ $\phi(t)\leq [c(q-1)t]^{-1/(q-1)}$. By lower-semicontinuity, the same inequality is true for $J(\rho(t))$ for every $t$, and not only for a.e.\ $t$.

    This proves
    \[
        J(\rho(t))\leq \min\left\{At^{-1},Bt^{-1/3}\right\},
    \]
    with $A=\frac{\kappa}{4\pi^2}$ and $B=3\kappa^{1/3}$. The bound $J(\rho(t))\leq J(\rho_0)$ is also true by construction, by taking the limit $\epsilon\to 0$ of the inequality
    \[
        J(\rho_\epsilon(t))+\epsilon\int_\T f(\rho_\epsilon(t))\leq J(\rho_0)+\epsilon\int_\T f(\rho_0).
    \]

    Next we consider the case when the initial datum \(\rho_0\) is not BV. To do this, we can approximate \(\rho_0\) by a sequence of \(BV\) functions \(\rho_0^n\) satisfying the same bounds $\alpha\leq\rho_0^n\leq\beta$.

    Let \(( \rho^n, \Div z^n )\) be the weak solution of the TV-Wasserstein gradient flow with the initial data \(\rho_0^n\) obtained above, for the case where the initial datum is BV. They were obtained as a limit of the corresponding approximated TV gradient flow. We note that the estimates that we provided on such solutions of the approximated TV gradient flow also imply a bound on the quantities
    $$\int_s^T\int_\T|\nabla \Div z^n|^2\de x \de t,\quad\int_s^T|(\rho^n)'|_{W_2}^2(t)\de t$$
    in terms of $J(\rho^n(s))$, which, itself, is bounded by $s^{-1/3}$. 

    This allows to pass to the limit the weak formulation of the equation, at least on the open set $(0,\infty)$: for any \(\varphi \in C^\infty_c((0, \infty) \times \T)\), we have
    \begin{equation}
        \label{eq:weak_form_approximated_initial_data}
        \int_0^\infty \int_\T \left( \rho^n \partial_t u + \rho^n \nabla  \Div z^n \cdot \nabla u \right) \de x \de t= 0.
    \end{equation}
    Up to a subsequence, we may assume that there exists \((\rho, \Div z)\) which satisfies the following properties:
    \begin{align*}
        &(\rho^n, \Div z^n) \to (\rho, \Div z) \quad \text{a.e.\ in } (0, \infty) \times \T; \\
        \forall \; [T_1, T_2]\times \T \; &\text{, with} \; 0 < T_1 < T_2 < \infty: \\ 
        &\rho^n \to \rho \quad \text{in } L^p \quad \forall \;  p \in (1, \infty); \\
        &z_n \overset{\ast}{\rightharpoonup} z \quad \text{in } L^\infty ; \\
        &(\Div z^n, \nabla \Div z^n) \rightharpoonup (\Div z, \nabla \Div z) \quad \text{in } L^2 .
    \end{align*}
    Using the same argument as in \eqref{eq:J_compatibility}, we have \((\rho_t, z_t) \in \mathcal{A}\) a.e. \(t \in (0, T)\), and this proves that we found a solution to the TV gradient flow equation for $t>0$, and of course this solution satisfies all the bounds in the claim (both upper and lower bounds on the density, and $J(\rho_t)\leq\min\{ At^{-1},Bt^{-1/3}\})$.

    We are now left with proving that the initial datum is indeed preserved for \(\rho\). If we can prove equicontinuity of the curves $\rho^n$ than this follows from uniform convergence (the curves $\rho^n$ are continuous on $[0,T]$ and uniformly converge to $\rho$, which will be continuous : moreover the values $\rho^n(0)$ converge to $\rho(0)$ but, by construction, also to $\rho_0$). We use
    $$\int_{s_0}^{T}|(\rho^n)'|^2(t)dt\leq Cs_0^{-1/3}, $$
    and apply Lemma \ref{equicgamma} with \(q = 2\) and $\gamma=1/3$, which proves that $\rho^n$ is uniformly $C^{0, 1/3}$ and hence the sequence is equi-continuous. It is crucial here that we use $\gamma=1/3$ and not $\gamma=1$.
    \end{proof}

\appendix

\section{Useful inequalities}
\label{sec:appendix_inequality}

We gather here the proofs of some inequalities and estmates used in the last section.
\begin{lemma} 
   The following Gagliardo-Nirenberg type inequality holds:
    \[
    9 \Vert z \Vert^2_{L^\infty} \int_\T \vert \nabla \Div z \vert^2 \geq \int_\T \vert \Div z \vert^4 \quad \forall z \in H^2_{\Div}(\T).
    \] 
\end{lemma}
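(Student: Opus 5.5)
Set $Z=\Div z\in H^1(\T)$ and integrate by parts once, moving one derivative from $Z^3$ onto $z$. This gives
\[
\int_\T Z^4=\int_\T Z^3\,\Div z=-\int_\T z\cdot\nabla(Z^3)=-3\int_\T Z^2\,z\cdot\nabla Z .
\]
Cauchy--Schwarz and the bound $|z|\le\Vert z\Vert_{L^\infty}$ then give
\[
\int_\T Z^4\le 3\Vert z\Vert_{L^\infty}\Big(\int_\T Z^4\Big)^{1/2}\Big(\int_\T|\nabla Z|^2\Big)^{1/2}.
\]
Squaring and dividing by $\int_\T Z^4$ (the case where this is zero is trivial) yields $\int_\T Z^4\le 9\Vert z\Vert_{L^\infty}^2\int_\T|\nabla Z|^2$, which is exactly the claim with the constant $9$. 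On the torus there are no boundary terms, so the whole argument is a single integration by parts.

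The main obstacle is justifying this computation for $z\in H^2_{\Div}(\T)$ only, i.e.\ $z\in L^2$ with $Z\in H^1$. If $\Vert z\Vert_{L^\infty}=\infty$ there is nothing to prove, so assume $z\in L^\infty$. We also have to check that $Z^3$ is an admissible test function and that $Z\in L^4$.

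I would handle this by truncation. Let $T_M(s)=\max(-M,\min(M,s))$ and use the test function $\Phi_M(Z)$ with $\Phi_M(s)=T_M(s)^3$. This is Lipschitz, so $\Phi_M(Z)\in H^1$ with $\nabla\Phi_M(Z)=3T_M(Z)^2\mathbf 1_{\{|Z|<M\}}\nabla Z$. The duality $\int\Phi_M(Z)\Div z=-\int z\cdot\nabla\Phi_M(Z)$ holds for $z\in L^2$ with $\Div z\in L^2$ and $\Phi_M(Z)\in H^1$, by density of smooth functions in $H^1(\T)$. This gives
\[
\int_\T T_M(Z)^3 Z\le 3\Vert z\Vert_\infty\Big(\int_\T T_M(Z)^4\Big)^{1/2}\Vert\nabla Z\Vert_{L^2}.
\]
Since $T_M(Z)^4\le T_M(Z)^3Z$, we can divide and obtain $\int_\T T_M(Z)^4\le 9\Vert z\Vert_\infty^2\Vert\nabla Z\Vert_{L^2}^2$ uniformly in $M$. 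Monotone convergence as $M\to\infty$ then concludes the proof.
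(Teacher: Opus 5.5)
Your proof is correct and is the same argument as the paper's: one integration by parts $\int_\T Z^4=-3\int_\T Z^2\,z\cdot\nabla Z$, followed by Cauchy--Schwarz. Your truncation with $T_M(Z)^3$ is a worthwhile addition, because the paper only carries out the computation formally, and for $d\ge 5$ the assumption $Z\in H^1$ does not by itself give $Z\in L^4$ or $Z^3\in H^1$.
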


\begin{proof}
For simplicity we set $K=\Div z$. We consider
$$\int_\T K^4=\int_\T K^3\Div z=-\int_\T z\cdot 3K^2\nabla K\leq 3||z||_{L^\infty}\left(\int_\T|\nabla K|^2\right)^{1/2}\left(\int_\T K^4\right)^{1/2}.$$
We then divide by $\left(\int_\T K^4\right)^{1/2}$ and square both sides.
\end{proof}

\begin{lemma}\label{equicgamma}
Assume that a continuous curve $\omega:[0,T]\to X$ valued in a metric space $(X,d)$ satisfies
\[
    \int_{s}^{T}|\omega'|^q(t)\de t\leq Cs^{-\gamma}
\]
for an exponent $0<\gamma<q-1$ and for every $s>0$. Then $\omega$ has the following H\"older modulus of continuity
\[
    d(\omega(s),\omega(t))\leq C'|t-s|^{1-\frac{\gamma+1}{q}}
\]
for a constant $C'$ only depending on $C,\gamma$ and $q$.
\end{lemma}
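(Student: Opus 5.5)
For $0<s<t\le T$, I would combine two bounds on $d(\omega(s),\omega(t))$ and then optimize with a dyadic decomposition toward $0$. Since $\omega$ is absolutely continuous on $[s,t]$ (it has finite $q$-energy there), Hölder's inequality gives
\[
d(\omega(s),\omega(t))\le\int_s^t|\omega'|\le (t-s)^{1-\frac1q}\Big(\int_s^T|\omega'|^q\Big)^{1/q}\le C^{1/q}(t-s)^{1-\frac1q}s^{-\gamma/q}.
\]
Writing $h=t-s$, this already gives the claim in the regime $s\ge h$: then $s^{-\gamma/q}\le h^{-\gamma/q}$, so
\[
d(\omega(s),\omega(t))\le C^{1/q}h^{1-\frac{\gamma+1}{q}}.
\]

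For the regime $s<h$, I would pass through the point $0$ and use the triangle inequality:
\[
d(\omega(s),\omega(t))\le d(\omega(0),\omega(s))+d(\omega(0),\omega(t)).
\]
It then suffices to show $d(\omega(0),\omega(r))\le C''r^{\beta}$ with $\beta:=1-\frac{\gamma+1}{q}>0$, because $s,t\le 2h$.

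To get this bound, I would decompose $(0,r]$ into the dyadic intervals $[2^{-k-1}r,2^{-k}r]$. On each of them, the first estimate (with starting point $2^{-k-1}r$ and length $2^{-k-1}r$) gives
\[
d\big(\omega(2^{-k-1}r),\omega(2^{-k}r)\big)\le C^{1/q}(2^{-k-1}r)^{1-\frac1q}(2^{-k-1}r)^{-\gamma/q}=C^{1/q}(2^{-k-1}r)^{\beta}.
\]
Summing over $k\ge0$ gives a geometric series, which converges precisely because $\beta>0$, i.e. $\gamma<q-1$. Continuity of $\omega$ at $0$ then lets me pass to the limit in the telescoping sum. The result is
\[
d(\omega(0),\omega(r))\le C^{1/q}\,\frac{2^{-\beta}}{1-2^{-\beta}}\,r^\beta .
\]

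Combining the two regimes yields the claim with $C'$ depending only on $C$, $\gamma$ and $q$; the $2^\beta$ factor comes from $s,t\le 2h$. The only delicate point is the dyadic summation near $t=0$: this is exactly where the hypothesis $\gamma<q-1$ enters. For $q=2$ it excludes $\gamma=1$, as the paper remarks.
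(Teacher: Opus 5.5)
Your proposal is correct and follows essentially the same argument as the paper: the H\"older bound $d(\omega(s),\omega(s'))\le C^{1/q}|s'-s|^{1-1/q}s^{-\gamma/q}$, combined with a dyadic chaining whose geometric series converges precisely because $1-\frac{\gamma+1}{q}>0$. The only difference is cosmetic: you chain down to $0$ (an infinite telescoping sum closed by continuity at $0$, then the triangle inequality through $\omega(0)$ when $s<t-s$), whereas the paper chains finitely from the larger endpoint $s_0$ through $s_k=2^{-k}s_0$ to the last dyadic point above $s$ and uses continuity only for the case $s=0$.
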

\begin{proof}
First, we observe that for every \(s<s'\) we have 
\[
    d(\omega(s),\omega(s'))\leq\int_s^{s'}|\omega'(t)|\de t\leq \left(\int_s^{s'}|\omega'(t)|^q \de t\right)^{1/q}|s'-s|^{1/q'}\leq C\frac{|s-s'|^{1/q'}}{s^{\gamma/q}},
\]
where \(q'=\frac{q}{q-1}\) is the conjugate exponent of \(q\).

Let us now take $0<s<s_0$ and define $s_k:=2^{-k}s_0$. There exists $N\geq 0$ such that $s_{N+1}=s_N/2\leq s<s_N$.  

We then use
\begin{eqnarray*}
    d(\omega(s),\omega(s_0))&\leq& d(\omega(s),\omega(s_N))+\sum_{k=0}^{N-1}d(\omega(s_k),\omega(s_{k+1}))\\
    &\leq& C\frac{|s-s_N|^{1/q'}}{s^{\gamma/q}}+C\frac{|s_0-s_1|^{1/q'}}{s_1^{\gamma/q}}\chi_{N>1}\sum_{k=0}^{N-1}2^{-k(1-\frac{\gamma+1}{q})}.
\end{eqnarray*}
Note that the last sum may be empty in case $N=0$, which explains the factor $\chi_{N>1}$ which is either one if $N>1$ or zero if $N=0$. Anyway, the sum converges because we have $1-\frac{\gamma+1}{q}>0$, so that we obtain
\[
    d(\omega(s),\omega(s_0))\leq C\frac{|s-s_N|^{1/q'}}{s^{\gamma/q}}+\chi_{N>1}C'\frac{|s_0-s_1|^{1/q'}}{s_1^{\gamma/q}}, 
\]
where the second term only appears if $N>0$.
We then use $|s-s_N|\leq s$ and $|s_0-s_1|\leq s_1$ to obtain
$$d(\omega(s),\omega(s_0))\leq C|s-s_N|^{1-\frac{\gamma+1}{q}}+C'|s_0-s_1|^{1-\frac{\gamma+1}{q}}\chi_{N>1},$$
and we conclude by observing that we have both $|s-s_N|\leq |s-s_0|$ and (when $N>1$) $|s_1-s_0|\leq |s-s_0|$. This provides the desired estimate for $s>0$. The case $s=0$ can be simply obtained from the case $s>0$ by continuity.
\end{proof}

\begin{remark}
    We note that the result of the previous lemma is sharp, in the sense that the modulus of continuity fo $\omega$ cannot in general be improved, as one can see from the example of the curve $\omega: [0,1]\to [0,1]$ given by $\omega(t)=t^{1-\frac{\gamma+1}{q}}$.
\end{remark}

\section*{Acknowledgements}

This work was supported by the European Union via the ERC AdG 101054420 EYAWKAJKOS project.

\bibliographystyle{abbrv}
\bibliography{./biblio}

\end{document}